\let\newpf\proof \let\proof\relax
\newcommand{\bt}{\begin{thm}}
\newcommand{\et}{\end{thm}}
\newcommand{\bl}{\begin{lemma}}
\newcommand{\el}{\end{lemma}}
\newcommand{\beq}{\begin{eqnarray}}
\newcommand{\eeq}{\end{eqnarray}}
\def\be{\begin{equation}}
\def\ee{\end{equation}}
\def\ba{{\begin{align}}}
\def\ea{{\end{align}}}
\newtheorem{thm}{Theorem}[section]
\newtheorem{cor}[thm]{Corollary}
\newtheorem{lemma}[thm]{Lemma}
\theoremstyle{remark}
\newtheorem{rem}{Remark}[section]
\numberwithin{equation}{section}
\def \bn {\hfill \\ \smallskip\noindent}
\theoremstyle{definition}
\def\proof{\bn {\bf Proof.} }
\def\note#1
\newcommand{\dist}{\operatorname{dist}}
\renewcommand{\mod}{\operatorname{mod}}
\newcommand{\C}{{\mathbb C}}
\newcommand{\N}{{\mathbb N}}
\newcommand{\Q}{{\mathbb Q}}
\newcommand{\R}{{\mathbb R}}
\newcommand{\T}{{\mathbb T}}
\newcommand{\Z}{{\mathbb Z}}
\def\B0{{\bold{0}}}
\def\Empty{}
\newcommand\oplabel[1]{
  \def\OpArg{#1} \ifx \OpArg\Empty {} \else
  	\label{#1}
  \fi}
\newcommand{\comm}[1]{}
\newcommand{\comment}[1]{}
\begin{document}

\title{Full measure reducibility and localization for quasiperiodic Jacobi operators: a topological criterion}
\title[]{Full measure reducibility and localization for quasiperiodic Jacobi operators: a topological criterion}
\begin{abstract}
We establish a topological criterion for connection between reducibility to constant rotations and dual localization, for the general family of analytic quasiperiodic Jacobi operators. As a corollary, we obtain the sharp arithmetic phase transition for the extended Harper's model in the positive Lyapunov exponent region.
\end{abstract}

\setcounter{tocdepth}{1}
\author{Rui Han and Svetlana Jitomirskaya}
\maketitle

\section{Introduction}

In this paper we study the general class of Jacobi operators
\begin{align}
(H_{c}(\theta)u)_n=c(\theta+n\alpha)u_{n+1}+\tilde{c}(\theta+(n-1)\alpha)u_{n-1}+v(\theta+n\alpha)u_{n},
\end{align}
where
$c(\theta)=\sum_{k}\hat{c}_k e^{2\pi i k(\theta+\frac{\alpha}{2})}\in
C^{\omega}(\T)$,  $\tilde{c}(\cdot) \in C^{\omega}(\T)$,
$\tilde{c}(\theta)=\overline{c(\theta)}$ on $\T,$ and $v(\theta)=\sum_{k}\hat{v}_k e^{2\pi i k \theta}\in C^{\omega}(\T)$. 
We will assume $\hat{v}_k=\overline{\hat{v}_{-k}}$, $\hat{c}_k\in\R$. Such operators arise as effective Hamiltonians in a tight-binding description of a crystal subject to a weak external magnetic field, with $c,v$ reflecting the lattice geometry and the allowed electron hopping between lattice sites. The prime example, both in math and in physics literature, is the extended Harper's model, see (\ref{defehm}).
Notice that when $c(\theta)\equiv 1$ (this corresponds to the nearest neighbor hopping on a square lattice) we get the Schr\"odinger operator
\begin{align}
(H(\theta)u)_n=u_{n+1}+u_{n-1}+v(\theta+n\alpha)u_n.
\end{align}

The Aubry dual of $H_c$ is an operator $\tilde H_c$ defined by
\begin{align}
(\tilde{H}_c(x)u)_m=\sum_{m^{\prime}}d_{m^{\prime}}(c,v)(x)u_{m-m^{\prime}},
\end{align}
where $d_{m^{\prime}}(c,v)(x)=\hat{c}_{m^{\prime}}e^{2\pi i (x-\frac{m^{\prime}}{2}\alpha)}+\hat{v}_{-m^{\prime}}+\hat{c}_{-m^{\prime}}e^{-2\pi i (x-\frac{m^{\prime}}{2}\alpha)}$.

The Aubry duality can be explained by the magnetic nature and corresponding gauge invariance of operators $H_c$ \cite{mz} and has been formulated and explored on different levels, e.g. \cite{mz}, \cite{gjls}, \cite{aj2}.
The dynamical formulation of Aubry duality is an observation that if
 $\tilde H_{c}(\theta)$ has an eigenvalue at $E$ with respective eigenvector $\{u_n\}$, then, considering its Fourier transform, $u(x) : = \sum_{n\in\mathbb{Z}} u_n \mathrm{e}^{2 \pi i n x} \in L^2(\mathbb{T})\setminus \{0\}$ and letting
\begin{equation}
M_\theta(x) = \left(\begin{array}{c c} u(x) & u(-x) \\ \mathrm{e}^{-2 \pi i \theta} u(x - \alpha) & \mathrm{e}^{2 \pi i \theta} u(-(x-\alpha)) \end{array}\right)  ~\mbox{,}
\end{equation}
$M_\theta$ provides an $L^2$ semiconjugacy between the transfermatrix cocycle of $H_c$ and the rotation $R_\theta = \begin{pmatrix} \mathrm{e}^{2 \pi i \theta} & 0 \\ 0 & \mathrm{e}^{-2 \pi i \theta} \end{pmatrix} ~\mbox{.}$  For $\theta$ that are not $\alpha$-rational, $\det M_\theta(x)$ doesn't vanish for a.e. $x$ \cite{ajm}, leading to reducibility of the  transfermatrix cocycle of $H_c$ to a constant rotation $R_\theta$. In particular, pure point spectrum for a.e. $\theta$ of $\tilde H_{c}(\theta)$ leads to reducibility for cocycles of $H_c$ for a.e. $E$ with respect to the density of states \cite{puig}, \cite{ajm},
with the quality of reducibility governed by the rate of decay of
$u_n.$ As there are well developed methods to prove localization (thus exponential decay of the eigenfunctions) in various applications, this can be used to establish further interesting consequences \cite{aj2,ajm,rui}.

With the development of recent powerful methods \cite{ak,afk,arc} to
establish non-perturbative reducibility directly and independently of
localization for the dual model, the reverse direction: obtaining
localization for $\tilde H_c$ from reducibility of $H_c$, first used
in a more restricted form back  in \cite{blt},  started
gaining prominence. In the Schr\"odinger case, reducibility provides a
direct construction of eigenfunctions for the dual model (with the
decay governed by the quality of reducibility), so their completeness
becomes the main issue. This has been considered a nontrivial question
even for the almost Mathieu family. It had been conjectured for a long
time \cite{conj} that $\lambda=e^{\beta}$, where $\beta$ is the upper
rate of exponential growth of denominators of continued fractions
approximants to $\alpha$ (see (\ref{beta})), is the phase transition
line from purely singular continuous spectrum to pure point
spectrum. A combination of the almost reducibility conjecture
\cite{arc} and techniques of \cite{afk,hy,yz} led to establishing
reducibility throughout the dual of the entire conjectured
localization region, yet completeness of the resulting eigenfunctions
remained a problem. This was recently resolved in \cite{ayz} where the
authors used delicate quantitive information on the reducibility and
therefore dual eigenfunctions with certain rate of decay to prove the
pure point spectrum part of the conjecture. More recently, in
\cite{jk}, the authors obtained an elementary proof of complete
localization for the dual model under the assumption of only  $L^2$
degree $0$-reducibility (see definition in (\ref{degree0}) of the Schr\"odinger cocycle for $H(\theta)$ for almost all energies with respect to the density of states measure.

For the Jacobi case the situation is more problematic.  It was noticed (albeit in a different form) in \cite{mz} that for $c\not\equiv 1$ the existence of reducibility at $E$ for the cocycle of $H_c$ may not lead to $E$ being an eigenvalue of $\tilde H_{c}$. 
The difficulty is also reflected in the extended Harper's model (see Section \ref{ehmsec}).
On the positive side, in the dual regions I and II, we do in general have purely absolutely continuous spectrum, which is always associated to reducibility, in region II, and pure point spectrum in region I \cite{JKS, ajm}. 
However on the negative side, purely absolutely continuous spectrum for a.e. $\theta$ has been proved throughout the whole self-dual region III in the anisotropic case \cite{ajm}. 
Thus whether reducibility implies localization for the dual model could depend on $c,v,\alpha$, and even the existence of dual eigenvectors, automatic in the Schr\"odinger case, becomes an issue.

In this paper, we answer  this question for analytic
$c$. We establish an if-and-only-if topological criterion in terms of
the function $c$ only, for the reducibility for $H_c$ to imply pure
point spectrum of $\tilde H_c$.  Thus we extend the result of
\cite{jk} to the Jacobi setting in a sharp way and also describe
exactly what happens in the region to which it does not extend. 
It turns out the {\it winding number} $w(c)$ of $c(\theta)$  (see (\ref{winding})) is the key quantity.

While in this paper we are dealing with $C^\omega$ cocycles, and in
our main application we will have analytic reducibility as an input, the
general theorem only requires  a much weaker reducibilty, namely  $L^2$-degree
$0$ reducibility, first defined in
\cite{jk}. \footnote{$L^2$-reducibility is the minimal sensible
  requirement in this context. That ``degree $0$'' is not a strong restriction
 is illustrated by the fact that already
$C^0$-reducibility to a constant rotation implies  $L^2$-degree
$0$ reducibility, see Remark \ref{rem22}.}
Thus for the main theorem, we will use ``reducible'' in this weak sense, meaning
$L^2$-degree $0$ reducible, as defined in (\ref{degree0}). 
With the normalized transfer matrix cocycle $(\alpha, \tilde{A}_{|c|,E})$ defined in (\ref{normalA}),
we have
\begin{thm}\label{main}
Suppose for $c(\cdot)\in
C^\omega_{\frac{h}{2\pi}}(\T,\C\backslash\{0\})$, $\beta(\alpha)<h$,
the normalized cocycles $(\alpha, \tilde{A}_{|c|,E})$ are
 reducible for a.e. $E$ with respect to the density of states measure. Then for a.e. $x\in\T$
\begin{itemize}
\item if $w(c)= 0$, the spectra  of $\tilde{H}_c(x)$ are pure point.
\item if $w(c)\neq 0$, the spectra of  $\tilde{H}_c(x)$ are purely absolutely continuous.
\end{itemize}


\end{thm}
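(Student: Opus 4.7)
The plan is to follow the blueprint of \cite{jk} for the Schr\"odinger case, tracking the phase of $c(\theta)$ so that the role of $w(c)$ emerges as a topological obstruction. Write $c(\theta)=|c(\theta)|\,e^{2\pi i \phi(\theta)}$ with $\phi\colon\R\to\R$ real-analytic and $\phi(\theta+1)-\phi(\theta)=w(c)$. The true Jacobi cocycle $(\alpha,A_{c,E})$ and the normalized one $(\alpha,\tilde A_{|c|,E})$ are related by a conjugation built from $e^{2\pi i \phi}$ together with an antiderivative along the $\alpha$-orbit; concretely, $\det A_{c,E}$ has winding $-2w(c)$ on $\T$ while $\det \tilde A_{|c|,E}$ has trivial winding. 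Single-valuedness of the gauge on $\T$ is thus controlled precisely by $w(c)$.

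\emph{Case $w(c)=0$.} Here $\phi \in C^\omega(\T,\R)$ is genuinely periodic, the gauge is single-valued on $\T$, and the hypothesized $L^2$-reducing matrix for $\tilde A_{|c|,E}$ pulls back to an $L^2$-reducing $B_E(\theta)$ for $A_{c,E}$. Its first column is a candidate Bloch wave $u_E(\theta)\in L^2(\T)$, whose Fourier coefficients --- plugged into the $M_\theta$ formula from the introduction --- should furnish an $\ell^2$ eigenvector of $\tilde H_c(x)$ with eigenvalue $E$ at the point $x$ dictated by duality. To obtain pure point spectrum at a.e.\ $x$, I would integrate $\|u_{E,\theta}\|_{\ell^2}^2$ against the density of states $d\mathcal N(E)$, apply Parseval, and invoke the inverse Aubry construction of \cite{ajm} to identify the result with the total spectral measure of $\tilde H_c(x)$ at a.e.\ $x$. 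This reproduces the completeness mechanism of \cite{jk}, with the $c,\tilde c$ factors absorbed into the gauge.

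\emph{Case $w(c)\neq 0$.} Suppose for contradiction that $\tilde H_c(x)$ has an $\ell^2$ eigenfunction $\{u_n\}$ at some $E$, on a set of $x$ of positive measure. Its Fourier series $u(x)\in L^2(\T)$ assembles into the $M_\theta$ of the introduction, yielding an $L^2$ semiconjugacy $A_{c,E}(x)M_\theta(x)=M_\theta(x+\alpha)R_\theta$. Taking determinants, $\det A_{c,E}(x)=\det M_\theta(x+\alpha)/\det M_\theta(x)$; by \cite{ajm} the denominator is non-zero a.e., so choosing continuous branches of the argument on $\T$ and integrating forces $w(\det A_{c,E})=0$, contradicting $w(\det A_{c,E})=-2w(c)\neq 0$. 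Thus no such $\ell^2$ eigenfunctions exist, ruling out pure point spectrum. The hypothesized $L^2$-reducibility of $\tilde A_{|c|,E}$ nevertheless provides bounded transfer-matrix solutions for $A_{c,E}$ for a.e.\ $E$, via the $L^\infty$ gauge $e^{2\pi i \phi}$, and a subordinacy argument in the spirit of \cite{ajm} then excludes singular continuous spectrum, giving purely absolutely continuous spectrum.

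The main obstacle is the completeness step in the $w(c)=0$ case: proving that the constructed eigenfunctions account for the \emph{full} spectral measure of $\tilde H_c(x)$ at a.e.\ $x$, not merely a positive fraction of it. The Jacobi factors $c,\tilde c$ complicate the Parseval/Aubry identity used cleanly in \cite{jk}, and one must track precisely how the gauge interacts with the spectral projection. A secondary difficulty is the deduction of absolute continuity in the $w(c)\neq 0$ case, where the reducibility hypothesis lives on $\tilde A_{|c|,E}$ but the desired spectral conclusion lives on $A_{c,E}$, and the passage between them is obstructed by the very winding that rules out pure point spectrum.
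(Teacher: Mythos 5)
Your first part follows the paper's route in outline, but the key analytic step is missing: passing from reducibility of $(\alpha,\tilde{A}_{|c|,E})$ to reducibility of $(\alpha,A_{c,E})$ is not merely a matter of the phase of $c$ being single-valued when $w(c)=0$. After the algebraic conjugation by $M_c$, the two cocycles still differ by the scalar factor $\sqrt{|c|(\theta)|c|(\theta-\alpha)}/c(\theta)$, and removing a scalar factor requires writing $c/|c|$ as a multiplicative coboundary $e^{f(\theta+\alpha)-f(\theta)}$ over the rotation by $\alpha$. That is a small-denominator problem; it is solvable precisely because $w(c)=0$ makes $\log(c/\tilde{c})$ well defined with zero mean and because $\beta(\alpha)<h$ --- this is the paper's Lemma~\ref{analytic} and is where the arithmetic hypothesis actually enters. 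One also needs to upgrade $L^2$- to $C^\omega$-reducibility (via \cite{afk}) and normalize the conjugacy to degree zero before reading off dual eigenfunctions; with these repairs your completeness argument via \cite{jk} goes through essentially as in the paper.

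The second part has two genuine gaps. First, the winding-number contradiction is not valid as stated: $M_\theta$ is only an $L^2$ matrix function with $\det M_\theta\neq 0$ a.e., so there is no continuous branch of $\arg\det M_\theta$ to integrate, and a continuous function of nonzero degree can a priori still be a \emph{measurable} multiplicative coboundary; excluding that requires an ergodicity argument for the associated skew product, not a degree computation. Second, and more seriously, your route to absolute continuity conflates the primal and dual models: boundedness of the transfer matrices of $A_{c,E}$ supplied by reducibility controls the spectral type of $H_c(\theta)$, not of $\tilde{H}_c(x)$, and subordinacy theory is unavailable for $\tilde{H}_c$, which is in general a long-range operator with no transfer-matrix formalism (the paper explicitly flags this as the reason the Deift--Simon/\cite{gjls} route fails). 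The paper's actual mechanism is different: it conjugates $\tilde{H}_c$ to $\tilde{H}_s$, where $s(\theta)=c(\theta)e^{-2\pi i k_0(\theta+\frac{\alpha}{2})}$ has $w(s)=0$, by the explicit unitary $U_R^{-1}U_{k_0}U_R$ on $L^2(\T\times\Z)$; it uses the already established a.e.\ pure point spectrum of $\tilde{H}_s$ to build a complete orthogonal family $\psi^{q,l,j}_x$ whose $\tilde{H}_c(x)$-spectral measures are a.e.\ independent of $x$; and it deduces their absolute continuity from the absolute continuity of the density of states, itself obtained from reducibility via subordinacy applied to the \emph{primal} Jacobi operator. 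Without an argument of this kind the second bullet of the theorem is not established.
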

%



As an important application, we 
obtain the sharp arithmetic phase transition result for the extended Harper's model (see (\ref{defehm})) in the positive Lyapunov exponent region (see Theorem (\ref{transition})).

The extended Harper's model (EHM) acting on $l^2(\Z)$ is defined as follows:
\begin{align}\label{defehm}
(H_{\lambda, \alpha,\theta}u)_n=c(\theta+n\alpha)u_{n+1}+\tilde{c}(\theta+(n-1)\alpha)u_{n-1}+2\cos{2\pi(\theta+n\alpha)}u_{n}.
\end{align}
where $c(\theta)=\lambda_1 e^{-2\pi i(\theta+\frac{\alpha}{2})}+\lambda_2+\lambda_3 e^{2\pi i(\theta+\frac{\alpha}{2})}$.
It was first proposed by D.J. Thouless in 1983 \cite{thou} and arises when 2D electrons are allowed to hop to both nearest neighboring  (expressed through $\lambda_2$) and the next-nearest lattice sites (expressed through $\lambda_1$ and $\lambda_3$).
This model includes almost Mathieu operator as a special case (when $\lambda_1=\lambda_3=0$). It is the central (non-Schr\"odinger) Jacobi operator, and has been a subject of many investigations, especially in physics literature. Important advances in the last decade include \cite{JKS,Jm,rui}. 
Recently, spectral decomposition for all $\lambda$ and a.e.  $\alpha,\theta$ was established in \cite{ajm}, thus making arithmetic spectral transitions in frequency the next natural object of study for this model. 

While in the subcritical regime (characterized by
the Lyapunov exponent vanishing in the complexified strip) it is
expected that, as in the corresponding almost Mathieu regime \cite{artur}, the
spectral properties do not depend on $\alpha$, in
the supercritical regime (that of positive Lyapunov exponents) there is
definitely a different behavior for the Diophantine \cite{JKS} and
Liouville \cite{kos} $\alpha$ thus it is interesting to determine a transition.
In the last few years, there have been several remarkable
developments, where in models with classical small denominator
problems leading to arithmetic transitions in spectral behavior, sharp
results were obtained, with analysis up to the very arithmetic
threshold. For the Maryland model, the spectral phase diagram was
determined {\it exactly} for all $\alpha,\theta$ in \cite{mar}. For
the almost Mathieu operator, the transition in $\alpha$ (conjectured
in 1994 \cite{conj}) was recently proved  in \cite{ayz}.
 Even more recently, pure point spectrum up to the transition was
 established by a different method in \cite{jl, jl2} with also an
 arithmetic condition on $\theta.$ Our main application adds to this
 growing collection by establishing a sharp transition in $\alpha$ for
 the extended Harper's operator.

An important feature of the extended Harper's model is that Lyapunov
exponents when restricted to the spectrum are constant and depend only
on $\lambda$ \cite{Jm}. Let $L(\lambda)$ (see (\ref{ehmLyapunov})) be the Lyapunov exponent
of $H_{\lambda, \alpha,\theta}$ for $E$ in the spectrum. It has been
computed exactly as a function of $\lambda$ \cite{thou,mz,Jm}.
 The exponential decay
provided by $L(\lambda)>0$ is in direct competition with small
denominators, with their smallness determined by the rate of growth of
continued fraction approximants of $\alpha$  and quantified by a
parameter $\beta(\alpha)$ defined in (\ref{beta})).  Resolving this competition
in a sharp way,  we have
\begin{thm}\label{transition} When $L(\lambda)>0$,
\begin{itemize}
\item if $\beta(\alpha)<L(\lambda)$, $H_{\lambda, \alpha, \theta}$ has pure point spectrum for a.e. $\theta$.
\item if $L(\lambda)<\beta(\alpha)$, $H_{\lambda,\alpha,\theta}$ has purely singular continuous spectrum for a.e. $\theta$.
\end{itemize}
\end{thm}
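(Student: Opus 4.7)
The theorem is the sharp arithmetic phase transition for EHM in the positive Lyapunov exponent region and splits into two cases: $\beta(\alpha)<L(\lambda)$ (pure point) and $L(\lambda)<\beta(\alpha)$ (purely singular continuous).

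For the localization direction $\beta(\alpha)<L(\lambda)$, my strategy is to invoke Theorem~\ref{main} via the Aubry duality for EHM. This duality takes $\lambda=(\lambda_1,\lambda_2,\lambda_3)$ to $\tilde\lambda:=\sigma(\lambda):=(\lambda_3/\lambda_2,1/\lambda_2,\lambda_1/\lambda_2)$; the positive Lyapunov region $\{\lambda_2>\max\{1,\lambda_1+\lambda_3\}\}$ is mapped into the subcritical Region II, and the standard dynamical Aubry isomorphism identifies $H_{\lambda,\alpha,\theta}$ with $\tilde H_{c_{\tilde\lambda}}(\theta)$. To apply Theorem~\ref{main} with $c=c_{\tilde\lambda}$ I need to verify three items: (i)~the Laurent polynomial $c_{\tilde\lambda}$ is nonvanishing and analytic in a strip of width $h/(2\pi)$ for some $h>\beta(\alpha)$; the strip width is determined by the complex roots of an explicit quadratic, and matching it against the Lyapunov formula~(\ref{ehmLyapunov}) shows that it is at least $L(\lambda)$, so that $\beta<L$ directly yields $\beta<h$; (ii)~the normalized cocycles $(\alpha,\tilde A_{|c_{\tilde\lambda}|,E})$ of $H_{\tilde\lambda}$ are $L^2$-reducible at a.e.\ energy with respect to the density of states, which follows from subcriticality of $\tilde\lambda$ in Region II together with analytic a.e.\ reducibility for subcritical cocycles (the Jacobi form of almost reducibility, building on \cite{ajm}); and (iii)~$w(c_{\tilde\lambda})=0$, verified by direct inspection of the Laurent polynomial's behavior on $\T$. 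Theorem~\ref{main} then yields pure point spectrum for $\tilde H_{c_{\tilde\lambda}}(x)\cong H_{\lambda,\alpha,x}$ for a.e.\ $x\in\T$.

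For the delocalization direction $L(\lambda)<\beta(\alpha)$, the two spectral types must be excluded directly. Absence of absolutely continuous spectrum is immediate from $L(\lambda)>0$ on the spectrum by the Ishii-Pastur theorem (in its Kotani form for Jacobi operators). To rule out pure point spectrum I would use a Gordon-type argument: the assumption $\beta>L$ provides a subsequence of continued-fraction denominators $q_n$ of $\alpha$ with $\|q_n\alpha\|<e^{-(L+\varepsilon)q_n}$ for some fixed $\varepsilon>0$. Over blocks of length $q_n$, the transfer matrix of $H_{\lambda,\alpha,\theta}$ is well approximated by its $q_n$-periodic truncation with error smaller than the Lyapunov growth $e^{Lq_n}$; a three-term Gordon inequality comparing a hypothetical $\ell^2$-eigenfunction $u$ with its translates $u(\cdot\pm q_n)$ then forces a uniform lower bound on the block $\ell^2$-norm that is incompatible with summability of $u$. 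Combining the two exclusions, the spectrum is purely singular continuous for a.e.\ $\theta$.

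The main obstacle in Part 1 is the sharp matching required in item (i): one must show that the width of the maximal nonvanishing analyticity strip of $c_{\tilde\lambda}$ equals exactly $L(\lambda)$, so that the full range $\beta<L$ is covered by the hypothesis $\beta<h$ of Theorem~\ref{main}. This is a direct but delicate computation using the explicit roots of the quadratic Laurent polynomial governing $c_{\tilde\lambda}$ together with the closed-form Lyapunov exponent~(\ref{ehmLyapunov}) of \cite{Jm}. In Part 2, the primary technical care is in choosing the continued-fraction subsequence so that the Gordon inequality beats the Lyapunov-scale approximation error exactly at the sharp threshold $\beta=L$.
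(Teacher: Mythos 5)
Your overall architecture matches the paper's: Aubry duality plus Theorem \ref{main} for the localization side, and a Gordon-type three-block estimate plus Kotani theory for the singular continuous side. But both halves have a gap at exactly the point where the sharpness of the threshold lives.

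For the pure point part, item (ii) is where the condition $\beta(\alpha)<L(\lambda)$ actually does its work, and you pass over it. Subcriticality of $(\alpha,\tilde A_{|d|,E})$ on the strip $|\mathrm{Im}\,\theta|\le L(\lambda)/(2\pi)$ (Theorem \ref{LE}) only yields \emph{almost} reducibility there (Theorem \ref{arc}); to upgrade this to genuine reducibility for a.e.\ $E$ with respect to the density of states one must run the local reducibility step of \cite{afk,hy} (Theorem \ref{reducible}), which requires (a) the fibered rotation number to lie in $\cup_{\gamma>0}\mathrm{DC}_{\alpha}(\tau,\gamma)$ --- a full-measure condition that must be checked to survive conjugation by the almost-reducibility conjugacies $B_{n_k}$, via the degree bound $|\deg B_{n_k}|\le Cq_{n_k}$ --- and (b) $\beta(\alpha)<\tilde h$ for some $\tilde h$ strictly inside the subcriticality strip, i.e.\ precisely the hypothesis $\beta(\alpha)<L(\lambda)$. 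Your item (i) (that the nonvanishing strip of $c_{\tilde\lambda}$ has width at least $L(\lambda)$) is a correct and needed verification of the standing hypothesis of Theorem \ref{main}, but it is not the main obstacle; without spelling out (ii) your argument never uses $\beta<L$ where it must, and "reducibility follows from subcriticality" is false as stated for $\beta\ge L$.

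For the singular continuous part, your Gordon scheme is the right one but, as written, it only works when $c$ does not vanish on $\T$. In region I the function $c(\theta)=\lambda_1e^{-2\pi i(\theta+\alpha/2)}+\lambda_2+\lambda_3e^{2\pi i(\theta+\alpha/2)}$ has real zeros whenever $\lambda_1=\lambda_3\ge\lambda_2/2$ or $\lambda_1+\lambda_3=\lambda_2$, so the transfer matrix $A_{c,E}=D_{c,E}/c$ is singular and the periodic-approximation error $\|A^{i}-A^{q_n+i}\|$ cannot be bounded by $C\|q_n\alpha\|_\T$ near the zeros; the naive three-term inequality breaks down there. The paper's fix (Theorem \ref{sing}) is to factor $c=fg$ with $f$ carrying the zeros, to bound $\prod_{j=0}^{q_{n_l}-1}|f(\theta+j\alpha)|$ from below by $e^{(\delta_c-\epsilon)q_{n_l}}/q_{n_l+1}$ (Lemma \ref{jyest}), and to telescope with the regular matrices $D^k$ rather than $A^k$; this is why the natural threshold is the phase-dependent quantity $\delta_c(\alpha,\theta)$ of (\ref{defdeltac}), which equals $\beta(\alpha)$ only for a.e.\ $\theta$. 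You need either this machinery or an explicit restriction to the non-singular parameter cases; as proposed, your part 2 does not cover all of region I.
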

\begin{rem}
$L(\lambda)>0$ if and only if $1>\max{(\lambda_1+\lambda_3, \lambda_2)}$, see Theorem \ref{LE}.
\end{rem}

\begin{center}
\begin{tikzpicture}[thick, scale=1]

    \draw[->] (-10,-1) -- (0,-1) node[below] {$\beta(\alpha)$};

    \draw(-2.5,-1) node [above] {sc};
    \draw(-7.5,-1) node [above] {pp};
    \draw(-10, -1) node [below] {$0$};
    \draw(-5,   -1) node [below] {$L(\lambda)$};
    \draw(-5,   -1) node [] {$\bullet$};
    \draw(-10, -1) node [] {$\bullet$};
\end{tikzpicture}
\end{center}

The second statement of Theorem \ref{transition} does not require a
specific form of $c(\theta)$ and holds for general analytic $c$ that
are even allowed to vanish on $\T$. Namely, one can define a
coefficient $\delta_c(\alpha,\theta)\in[-\infty,\infty],$ dependent on
$c(\theta)$ through its zeros on $\T$ only, see (\ref{defdeltac}), and
satisfying $\delta_c(\alpha,\theta) =\beta(\alpha)$ for a.e. $\theta,$
so that

\begin{thm}\label{sing}
For any Lipshitz $v$, $ H_{c}( \theta) $ 
has no eigenvalues on $\{ E:L(E)< \delta_c(\alpha, \theta)\}$. In
particular, for a.e. $\theta$ it has no eigenvalues on $\{ E:L(E)< \beta(\alpha)\}$.
\end{thm}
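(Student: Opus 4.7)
The plan is to argue by contradiction. Suppose $u=(u_n)_{n\in\Z}\in\ell^2(\Z)$ is a nonzero eigenfunction of $H_c(\theta)$ at energy $E$, with $L(E)<\delta_c(\alpha,\theta)$. The first step is to establish exponential decay of the eigenfunction at rate $L(E)$: a standard subharmonicity/upper-semicontinuity argument on the (normalized) transfer-matrix cocycle, combined with the $\ell^2$ assumption, yields
\[
|u_n|\le C_\varepsilon\, e^{-(L(E)-\varepsilon)|n|}\qquad\text{for every }\varepsilon>0\text{ and large }|n|.
\]
I then fix $\varepsilon>0$ small enough that $L(E)+2\varepsilon<\delta_c(\alpha,\theta)$, and by the definition of $\delta_c$ extract a sequence $|n_k|\to\infty$ along which $|c(\theta+n_k\alpha)|\le e^{-(\delta_c(\alpha,\theta)-\varepsilon)|n_k|}$.

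The key step is to decouple the operator at each such $n_k$. Let $H^{(n_k)}=H^-_{\theta,n_k}\oplus H^+_{\theta,n_k}$ be the operator obtained from $H=H_c(\theta)$ by replacing the coupling $c(\theta+n_k\alpha)$ (and its conjugate $\tilde c(\theta+n_k\alpha)$) by $0$; it is a direct sum of two half-line Jacobi operators acting on $\ell^2((-\infty,n_k])$ and $\ell^2([n_k+1,\infty))$. The perturbation $H-H^{(n_k)}$ is a rank-two operator supported at sites $n_k,n_k+1$, so combining the decay bound for $u$ with the smallness of $c(\theta+n_k\alpha)$ gives
\[
\|(H^{(n_k)}-E)u\|_{\ell^2}=\|(H-H^{(n_k)})u\|_{\ell^2}\le C\, e^{-(\delta_c(\alpha,\theta)+L(E)-2\varepsilon)|n_k|}.
\]
Relative to the $\ell^2$ norms of the restrictions $u|_{(-\infty,n_k]}$ and $u|_{[n_k+1,\infty)}$, which behave like $e^{-L(E)|n_k|}$, the resulting relative error is exponentially small. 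Consequently each restriction is a highly accurate approximate $\ell^2$ eigenvector of the corresponding half-line operator $H^\pm_{\theta,n_k}$ at energy $E$.

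To close the argument I would invoke Weyl $m$-function machinery on each half-line to translate these exponentially accurate approximate eigenvectors into genuine closeness of $E$ to point eigenvalues of $H^\pm_{\theta,n_k}$. Then, using that the shifted half-line operators depend Lipschitz-continuously on the parameter $\theta+n_k\alpha$ (via the Lipschitz regularity of $v$ and the analyticity of $c$), while the orbit $\{\theta+n_k\alpha\}_k$ takes infinitely many distinct values in $\T$, I would conclude that no fixed $E$ can be exponentially close to the discrete spectra of infinitely many such distinct shifted operators. This produces the desired contradiction. The a.e.\ assertion then follows from the identity $\delta_c(\alpha,\theta)=\beta(\alpha)$ for Lebesgue-a.e.\ $\theta$, which is built into the construction of $\delta_c$.

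The main obstacle I foresee is the last paragraph: converting the relative bound $\|(H^\pm-E)u^\pm\|\ll \|u^\pm\|$ into genuine closeness of $E$ to \emph{point} (rather than merely essential) spectrum of $H^\pm_{\theta,n_k}$, and quantifying how rapidly that point spectrum can move under the shift along the orbit. The analyticity of $c$ and the Lipschitz hypothesis on $v$ are expected to supply the required modulus of continuity, but executing this step carefully — in particular, ensuring the exponential mismatch cannot persist along the entire sequence $n_k$ — is the most delicate part of the proof.
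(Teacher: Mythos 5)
Your proposal has a fatal gap at its ``key step'': the claim that $L(E)<\delta_c(\alpha,\theta)$ lets you extract times $n_k$ with $|c(\theta+n_k\alpha)|\le e^{-(\delta_c(\alpha,\theta)-\varepsilon)|n_k|}$ is false, because $\delta_c$ does not encode smallness of $c$ along the orbit of $\theta$. From the definition (\ref{defdeltac}), each term $\ln\|q_n(\theta-\theta_j)\|$ is nonpositive, so $\delta_c(\alpha,\theta)\le\beta(\alpha)$ always; the quantity is driven by the rational approximation of $\alpha$ (the $\ln q_{n+1}/q_n$ term), and close approaches of the orbit to the zeros of $c$ only \emph{decrease} it. In the extreme case where $c$ has no zeros at all, $\delta_c\equiv\beta(\alpha)$ while $|c(\theta+n\alpha)|$ is uniformly bounded below, so the sequence $n_k$ you need simply does not exist --- yet the theorem is still nontrivial there (it is the Liouville/singular-continuous regime). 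The entire decoupling-into-half-lines argument therefore rests on a misreading of what $\delta_c$ measures. Two further steps are also unjustified: the pointwise decay $|u_n|\le C_\varepsilon e^{-(L(E)-\varepsilon)|n|}$ for an $\ell^2$ eigenfunction requires Oseledets/Ruelle regularity of $\theta$, which is only available for a.e.\ $\theta$, whereas the first assertion of the theorem is for every $\theta$ outside a countable set; and the concluding Weyl $m$-function step (approximate eigenvectors forcing $E$ near \emph{point} spectrum of infinitely many distinct half-line operators, then a ``spectra cannot all be close'' argument) is, as you yourself note, not an argument but a hope --- approximate eigenvectors only place $E$ near the spectrum, and there is no mechanism preventing a fixed $E$ from lying in the spectra of all the shifted operators (indeed the spectrum is $\theta$-independent).

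The mechanism the theorem actually exploits is almost-periodicity, not decoupling: this is a Gordon-type argument. One normalizes a putative bounded solution so that $\bigl\|\bigl(\begin{smallmatrix}u_0\\ u_{-1}\end{smallmatrix}\bigr)\bigr\|=1$, writes the singular transfer matrix as $A=D/f$ with $f$ carrying the zeros of $c$, and uses the Cayley--Hamilton identities (\ref{B1})--(\ref{B2}) together with a telescoping comparison of $A_{q_{n_l}}^2$ with $A_{2q_{n_l}}$ (and of $A_{q_{n_l}}^{-1}$ with $A_{-q_{n_l}}$). The Lipschitz hypothesis on $v$ and $\|q_{n_l}\alpha\|\le 2/q_{n_l+1}$ make the block-to-block differences of size $O(1/q_{n_l+1})$, while the lower bound $\prod_{j=0}^{q_{n_l}-1}|f(\theta+j\alpha)|\ge e^{(\delta_c-\epsilon)q_{n_l}}/q_{n_l+1}$ (Lemma \ref{jyest}, which is exactly where $\delta_c$ enters) controls the singular denominators; the hypothesis $L(E)<\delta_c(\alpha,\theta)$ then makes the total telescoped error $e^{(L(E)-\delta_c+4\epsilon)q_{n_l}}\to 0$, forcing one of the vectors at $\pm q_{n_l}$ or $2q_{n_l}$ to have norm at least $1/4$. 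This rules out any decaying solution for every admissible $\theta$ at once, with no need for a priori decay rates, half-line decompositions, or spectral perturbation theory. You would need to discard the proposed architecture and rebuild along these lines.
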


This immediately implies
 
\begin{cor}\label{cor}
 If $L(E)>0$ for a.e. $E$ (in particular, if there exists
 $\theta_0\in\T$ with $c(\theta_0)=0$), then $ H_{\alpha, \theta} $ has purely singular continuous spectrum on
$\{ E:L(E)< \delta_c(\alpha, \theta)\}$. If $L(E)>0$ for a.e. $E$ and
$c$ doesn't vanish on $\T$, then $ H_{\alpha, \theta} $ has purely
singular continuous spectrum on $\{ E:L(E)< \beta(\alpha)\}$, for all $\theta.$
\end{cor}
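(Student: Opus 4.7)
My plan is to derive the corollary by assembling Theorem \ref{sing} with the Ishii--Pastur--Kotani theorem for Jacobi operators. Theorem \ref{sing} already rules out eigenvalues on $\{E:L(E)<\delta_c(\alpha,\theta)\}$ for every $\theta$ (since $v$ is Lipschitz), so what remains to show is the absence of absolutely continuous spectrum on that set.

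To this end I would invoke Kotani's theorem in its Jacobi formulation: for each $\theta$, $\sigma_{\mathrm{ac}}(H_c(\theta))$ is contained in the essential closure of $\{E:L(E)=0\}$. Under the hypothesis that $L(E)>0$ for a.e.\ $E\in\R$, this set is Lebesgue-null, so its essential closure is empty and hence $\sigma_{\mathrm{ac}}(H_c(\theta))=\emptyset$ for every $\theta$; here $\theta$-independence is automatic since the a.c. spectrum is an invariant of the ergodic family and may be upgraded from a.e. to all $\theta$ by minimality of the base rotation. Combined with the absence of eigenvalues from Theorem \ref{sing}, the spectral measure restricted to $\{E:L(E)<\delta_c(\alpha,\theta)\}$ has no atoms and no a.c. part, hence is purely singular continuous; this gives the first assertion. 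For the second assertion, the additional hypothesis that $c$ does not vanish on $\T$, together with the definition (\ref{defdeltac}) which records $\delta_c$ as depending on $c$ only through its zeros on $\T$, forces $\delta_c(\alpha,\theta)=\beta(\alpha)$ for every $\theta$, so the conclusion of the first assertion specializes to the claimed ``for all $\theta$'' statement on $\{E:L(E)<\beta(\alpha)\}$.

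The argument is essentially an assembly: Theorem \ref{sing} supplies the point-spectrum input and Kotani theory supplies the a.c.\ one, with no new delicate estimates required. The only genuine checkpoint I foresee is confirming that the Kotani conclusion promotes cleanly from almost every $\theta$ to every $\theta$, and this is delivered by the $\theta$-invariance of $\sigma_{\mathrm{ac}}$ for minimal ergodic base dynamics; the second delicate point is the bookkeeping that $\delta_c(\alpha,\theta)$ really does collapse to $\beta(\alpha)$ under non-vanishing of $c$, which is immediate from the construction of $\delta_c$ through zeros of $c$. The parenthetical clause, asserting that a single zero $c(\theta_0)=0$ already forces $L(E)>0$ almost everywhere, is most naturally justified via the Thouless formula $L(E)=\int\log|E-E'|\,dN(E')-\int\log|c(\theta)|\,d\theta$, since a zero of $c$ pushes $-\int\log|c|$ strictly upward; this is used only to single out a natural class of examples where the main hypothesis is automatic and is not needed for the implication itself.
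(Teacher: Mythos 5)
Your assembly is exactly what the paper intends: Corollary \ref{cor} is stated as an immediate consequence of Theorem \ref{sing}, with the absence of eigenvalues coming from that theorem and the absence of absolutely continuous spectrum from Kotani theory for ergodic Jacobi operators (applicable here since $c$ is analytic and not identically zero, so $\log|c|\in L^1(\T)$ and $c(\theta+n\alpha)\neq 0$ for all $n$ once $\theta\notin\Theta$), together with the constancy of $\sigma_{\mathrm{ac}}$ in $\theta$; the collapse $\delta_c(\alpha,\theta)=\beta(\alpha)$ for nonvanishing $c$ is indeed read off directly from (\ref{defdeltac}).

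The one step that is not right as written is your justification of the parenthetical claim. The Thouless formula does not show that a zero of $c$ forces $L(E)>0$ for a.e.\ $E$: a zero of $c$ on $\T$ only produces an integrable logarithmic singularity, so $\int_\T\log|c|\,d\theta$ stays finite and need not even decrease (by Jensen's formula it equals $0$ for $c(\theta)=e^{2\pi i\theta}-e^{2\pi i\theta_0}$), while there is no a priori lower bound on $\int\log|E-E'|\,dN(E')$ that would make the difference positive. The actual source of this fact is the theory of singular analytic Jacobi cocycles \cite{Jm} (complexification of the phase, convexity and quantization of the acceleration, which rule out a positive-measure set of energies with vanishing Lyapunov exponent when $\det D_{c,E}$ vanishes on $\T$). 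Since you correctly flag the parenthetical as inessential to the implication itself, this does not undermine the two main assertions of the corollary, but the Thouless-formula sentence should be replaced by a citation of \cite{Jm}.
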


We prove Theorem \ref{main} in Section \ref{mainsection}. We first
show that zero $w(c)$ ensures that elements of the reducibility matrix
can be used to construct eigenfunctions for the dual model. Then we
employ an argument of \cite{jk} to show completeness of
  those eigenfunctions. To prove the second part we establish a
  unitary conjugacy in $L^2(\T\times \Z)$ between $\tilde{H}_c$ and
  $\tilde{H}_s$  for a certain $s$ with $w(s)=0,$ ensuring that, by part one, $\tilde{H}_s(x)$ has
  pure point spectrum for a.e. $x.$ We then use those eigenfunctions to construct a large family of
  vector-valued functions $\psi^{q,\ell,j}_x(\cdot)$ such that for a.e. $x$
  the corresponding spectral measures are constant in $x.$ Finally, we prove their
  absolute continuity based again on the reducibility for the original
  model and argue completeness. Once Theorem \ref{main} is proved, 
to establish  the pure point part of Theorem \ref{transition} all we
need is the dual reducibility which follows quickly from a combination
of \cite{arc,afk}, similarly to the argument of \cite{ayz}. This is done in Section \ref{ppsection}. In fact, Theorem
\ref{transition} is an extension of the main theorem of \cite{ayz},
and specializes to it when $\lambda_1=\lambda_3=0.$  

The singular
continuous part as well as the general
Theorem \ref{sing} are proved in Section \ref{singsection}. The result is similar in
spirit to the recent theorems on meromorphic potentials
\cite{mar,jy}. The non-singular case is simpler and could follow
similarly to the singular continuous part of \cite{ayz} but we choose
to treat it together with the more involved singular case. While the Jacobi situation is quite different, the common
feature of singular Jacobi and meromorphic cocycles is their
singularity, leading to certain shared phenomena.  It is an interesting
question whether the first statement of the Corollary \ref{cor} is sharp at least in some situations,
so whether like in the Maryland model \cite{mar}, there is pure point spectrum for the complementary
set of $\theta.$ It is also interesting to see whether the second
statement is sharp for general analytic potentials (something still
far from reach even in the Schr\"odinger case).

\section{Preliminaries}
For a bounded analytic function $f$ defined on a strip $\{|\mathrm{Im}\theta|<\epsilon\}$ we let $\|f\|_{\epsilon}= \sup_{|\mathrm{Im}\theta|<\epsilon}|f(\theta)|$. 
If $f$ is a bounded continuous function on $\R$, we let $\|f\|_{0} = \sup_{\theta\in \R} |f(\theta)|$. 
For a set $U\subset \R$ let $|U|$ be the Lebesgue measure of $U$.

\subsection{Rational approximation}
For $\alpha\in \R\setminus \Q$, let $\beta(\alpha)\in [0, \infty)$ be given by
\begin{align}\label{beta}
\beta(\alpha)=\limsup_{n\rightarrow \infty} \frac{\ln{q_{n+1}}}{q_n},
\end{align}
where $\{\frac{p_n}{q_n}\}$ is the continued fraction approximants of $\alpha$. $\beta(\alpha)$ being large means $\alpha$ can be approximated very well by rational numbers. The following properties are well known
\begin{align}\label{qnqn+1}
\frac{1}{q_{n+1}}\leq \|q_n\alpha\|_{\T}\leq \frac{2}{q_{n+1}},
\end{align}
\begin{align}\label{qnbest}
\|q_n\alpha\|_{\T}\leq \|k\alpha\|_{\T}\ \mathrm{for}\ \mathrm{any}\ 1\leq |k|\leq q_{n+1}-1,
\end{align}
where $\|x\|_{\T}=\dist{(x, \Z)}$ for $x\in \R$.

\subsection{Winding number}
For $c(\cdot)\in C^{\omega}(\T, \C\backslash \{0\})$ on $\T$, let
\begin{align}\label{winding}
w(c)=\int_{\T}\frac{c^{\prime}(\theta)}{c(\theta)}\ \mathrm{d}\theta
\end{align}
be the winding number of $c(\theta)$. It describes how many times does the graph of $c(\theta)$ circle around the origin when $\theta$ goes along $\T$.

\subsection{Cocycles and Lyapunov exponent}
Let $\alpha\in \R\setminus \Q$ and $A\in C^0(\T, M_2(\C))$ with $\log{\|A(\cdot)\|}\in L^1(\T)$. The quasi-periodic cocycle $(\alpha, A)$ is the dynamical system on $\T\times \C^2$ defined by $(\alpha, A)(x, v)=(x+\alpha, A(x)v)$. The {Lyapunov exponent} is defined by the formula
\begin{align}\label{defLE}
L(\alpha, A)=\lim_{n\rightarrow\infty}\frac{1}{n}\int_{\T}\log{\|A_n(x)\|}\mathrm{d}x=\inf_{n}\frac{1}{n}\int_{\T}\log{\|A_n(x)\|}\mathrm{d}x.
\end{align}
where
\begin{align*}
A_n(x)= A(x+(n-1)\alpha)\cdots A(x) \ \ \mathrm{for}\ n\geq 0.
\end{align*}

\begin{lemma}\label{uniformupp}$\mathrm{(}$e.g.\cite{aj2}$\mathrm{)}$
Let $(\alpha, A)$ be a continuous cocycle, then for any $\delta>0$ there exists $C_{\delta}>0$ such that for any $n\in \N$ and $\theta\in \T$ we have
\begin{align*}
\|A_n(\theta)\|\leq C_{\delta} e^{(L(\alpha, A)+\delta)n}.
\end{align*}
\end{lemma}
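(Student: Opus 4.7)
Write $f_n(\theta) := \log \|A_n(\theta)\|$. The cocycle identity $A_{n+m}(\theta) = A_m(\theta+n\alpha)\,A_n(\theta)$ and submultiplicativity of the operator norm give
\begin{equation*}
f_{n+m}(\theta) \leq f_n(\theta) + f_m(\theta+n\alpha),
\end{equation*}
so the plan is to turn this subadditive relation into an additive Birkhoff-type bound for a well-chosen scale $N$, and then exploit unique ergodicity of the rotation $\theta \mapsto \theta + N\alpha$ to make the resulting bound uniform in $\theta$.

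Concretely, given $\delta > 0$, I would first use the infimum characterization in \eqref{defLE} to choose $N = N(\delta)$ with $\int_\T f_N \leq N(L(\alpha,A) + \delta/2)$. For arbitrary $n$, writing $n = kN + r$ with $0 \leq r < N$ and iterating the subadditivity relation yields
\begin{equation*}
f_n(\theta) \leq f_r(\theta + kN\alpha) + \sum_{j=0}^{k-1} f_N(\theta + jN\alpha).
\end{equation*}
The remainder $f_r(\theta + kN\alpha)$ is uniformly bounded by $\max_{0 \leq r < N}\|f_r\|_0$, a constant depending only on $\delta$, by continuity of $A$ on the compact torus. For the Birkhoff sum, since $\alpha$ is irrational the rotation by $N\alpha$ is uniquely ergodic on $\T$, and $f_N$ is bounded above and upper semicontinuous, so the one-sided uniform ergodic theorem gives, uniformly in $\theta$, $\limsup_{k \to \infty} \tfrac{1}{k} \sum_{j=0}^{k-1} f_N(\theta + jN\alpha) \leq \int_\T f_N\,d\theta$. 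Hence for $k$ large enough the sum is bounded by $kN(L(\alpha,A) + \delta)$; for bounded $k$ it is absorbed into a constant. Combining the pieces, $f_n(\theta) \leq n(L(\alpha,A) + \delta) + C(\delta)$ for all $n$ and $\theta$, and exponentiating with $C_\delta := e^{C(\delta)}$ yields the lemma.

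The argument is entirely standard, so there is no substantive obstacle. The only minor subtlety is that $f_N$ need not be continuous if $\|A_N(\cdot)\|$ vanishes somewhere on $\T$, so the textbook uniform ergodic theorem for continuous observables does not apply directly; this is handled by the one-sided version for upper semicontinuous functions, which produces precisely the upper bound required. In the typical setting where $A$ takes values in $\GL_2(\C)$, as for transfer matrices of Jacobi operators with nonvanishing $c$, $f_N$ is automatically continuous and even this subtlety disappears.
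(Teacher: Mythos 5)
Your argument is correct and is precisely the standard one (the one-sided uniform subadditive ergodic theorem over a uniquely ergodic rotation, due essentially to Furman); the paper does not prove this lemma but cites it to \cite{aj2}, where the same scheme — choose $N$ nearly realizing the infimum in (\ref{defLE}), decompose $n=kN+r$, and use unique ergodicity of the rotation by $N\alpha$ together with upper semicontinuity of $\log\|A_N\|$ — is used. Your handling of the possible vanishing of $\|A_N\|$ via the upper semicontinuous version of the uniform ergodic theorem is exactly the right fix, so there is nothing to add.
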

\begin{rem}\label{intcontrolprod}
If we apply the previous lemma to one dimensional cocycle, we have that for any continuous function $z$, if $\ln{|z(\theta)|}\in L^1(\T)$ then for any $\epsilon>0$ there exists constant $C>0$ so that for any $a\leq b\in \Z$.
\begin{align*}
\prod_{k=a}^{b}|z(\theta+k\alpha)|\leq Ce^{(b-a+1)(\int_{\T}\ln{|z(\theta)|}d\theta+\epsilon)}\ \mathrm{for}\ \mathrm{any}\ \theta\in \T.
\end{align*}
\end{rem}

\subsection{Reducibility and rotation number}
Let 
\begin{align*}
R_{\theta}=
\left(
\begin{matrix}
\cos{2\pi \theta} \ \ &-\sin{2\pi \theta}\\
\sin{2\pi \theta} \ \ &\cos{2\pi \theta}
\end{matrix}
\right).
\end{align*}
Any $A\in C^0(\T, PSL(2,\R))$ is homotopic to $\theta \rightarrow R_{\frac{n}{2}\theta}$ for some $n\in \Z$, called the {\it degree} of $A$, denoted by $\deg{A}=n$.

Assume now that $A\in C^0(\T, SL(2,\R))$ is homotopic to identity.
Then there exists $\psi:\R/\Z \times \R/\Z \to \R$ and $u:\R/\Z \times \R/\Z \rightarrow \R^+$ such that
\begin{align*}
A(x) \cdot 
\left(
\begin{matrix}
\cos{2\pi y} \\ \sin{2\pi y} 
\end{matrix} 
\right)
=u(x,y)
\left(
\begin{matrix} 
\cos{2 \pi (y+\psi(x,y))} \\ \sin{2 \pi (y+\psi(x,y))} 
\end{matrix} 
\right).
\end{align*}
The function $\psi$ is called a {\it lift} of $A$.  
Let $\mu$ be any probability on $\R/\Z \times \R/\Z$ which is invariant by the continuous
map $T:(x,y) \mapsto (x+\alpha,y+\psi(x,y))$, projecting over Lebesgue
measure on the first coordinate (for instance, take $\mu$ as any
accumulation point of $\frac {1} {n} \sum_{k=0}^{n-1} T_*^k \nu$ where
$\nu$ is Lebesgue measure on $\R/\Z \times \R/\Z$).  
Then the number
\begin{align}\label{defrot}
\rho(\alpha,A)=\int \psi d\mu \mod \Z
\end{align}
does not depend on the choices of $\psi$ and $\mu$, and is called the
{ fibered rotation number} of
$(\alpha,A)$.

The fibered rotation number is invariant under real conjugacies which are
homotopic to the identity.  In general, if $(\alpha,A^{(1)})$ and
$(\alpha,A^{(2)})$ are real conjugate,
namely there exists $B\in C^0(\T, PSL(2, \R))$ so that $B^{-1}(x+\alpha)A^{(2)}(x)B(x)=A^{(1)}(x)$ and $\deg{B}=k$, then
\begin{align}\label{rhorhorelation}
\rho(\alpha,A^{(1)})=\rho(\alpha,A^{(2)})-k\alpha/2.
\end{align}

We say a cocycle $(\alpha, A)$ is {\it $C^{\omega}$ rotation-reducible} if it is real analytically conjugate to a rotation matrix, namely there exists $B\in C^{\omega}(\T, PSL(2, \R))$ and a function $\psi$ on $\T$, such that
\begin{align*}
B^{-1}(\theta+\alpha)A(\theta)B(\theta)=R_{\psi(\theta)}.
\end{align*}

A cocycle $(\alpha, A)$ is called {\it ${L^2}$-reducible} (or {\it $C^0$-reducible, $C^\omega$-reducible}) if there exists a matrix function $B\in L^2(\T, SL(2, \R))$ (or $B\in C^0(\T, SL(2, \R))$, $B\in C^\omega(\T, PSL(2, \R))$ respectively) such that
\begin{align}\label{defreducible}
B^{-1}(\theta+\alpha)A(\theta)B(\theta)=A_{*}\ \ \mathrm{for}\ \mathrm{a.e.}\ \theta\in \T,
\end{align}
with $A_*$ being a constant matrix. 

A cocycle $(\alpha, A)$ is called $L^2$-degree 0 reducible if (\ref{defreducible}) holds with
\begin{align}\label{degree0}
A_*=R_{\rho(\alpha, A)}.
\end{align}
\begin{rem}\label{rem22}
Suppose $(\alpha, A)$ is a $C^0$-reducible such that (\ref{defreducible}) holds with $A_{*}$ being a constant rotation matrix. 
Then it is $L^2$ (and $C^0$) degree 0 reducible.
\end{rem}

\subsection{Jacobi operators and normalized cocycles}
Given a family of Jacobi operators $\{H_{c}(\theta)\}_{\theta}$, a formal solution to the equation $H_{c}(\theta)u=Eu$ can be reconstructed via the following equation:
\begin{align*}
\left(
\begin{matrix}
u_{n+1}\\
u_{n}
\end{matrix}
\right)
=
A_{c,E}(\theta+n\alpha)
\left(
\begin{matrix}
u_{n}\\
u_{n-1}
\end{matrix}
\right)
\end{align*} 
where the transfer matrix is
\begin{align*}
A_{c,E}(\theta)=\frac{1}{c(\theta)}D_{c,E}(\theta)=\frac{1}{c(\theta)}
\left(
\begin{matrix}
E-v(\theta)\ \ &-\tilde{c}(\theta-\alpha)\\
c(\theta)\ \ & 0
\end{matrix}
\right).
\end{align*}
This means the cocycle directly related to the Jacobi operator is $(\alpha, A_{c,E})$. 
However this cocycle does have some disadvantages: first of all if $c\not\equiv 1$, $A_{c,E}(\theta)\notin SL(2,\R)$, secondly when $w(c)\neq 0$ (see (\ref{winding})) $A_{c, E}(\theta)$ is not homotopic to identity. Thus one couldn't properly apply the techniques and results obtained by the reducibility methods directly to this cocycle. 
We will instead work with a normalized cocycle $(\alpha, \tilde{A}_{|c|,E})$, where
\begin{align}\label{normalA}
\tilde{A}_{|c|,E}(\theta)=
\frac{1}{\sqrt{|c|(\theta)|c|(\theta-\alpha)}}D_{|c|,E}(\theta)=
\frac{1}{\sqrt{|c|(\theta)|c|(\theta-\alpha)}}
\left(
\begin{matrix}
E-v(\theta)\ \ &-|c|(\theta-\alpha)\\
|c|(\theta)\ \ &0
\end{matrix}
\right).
\end{align}
The cocycle $(\alpha, \tilde{A}_{|c|,E})$ can be used effectively. First, $(\alpha, \tilde{A}_{|c|, E})$ is closely related to $(\alpha, A_{c, E})$ in the sense that there is an explicit conjugation relation between $D_{c,E}$ and $D_{|c|,E}$, given by
\begin{align}\label{relateDcDnor}
D_{c,E}(\theta)=M_{c}(\theta+\alpha)D_{|c|,E}(\theta)M^{-1}_{c}(\theta),
\end{align}
where 
\begin{align}\label{conjDcDnor}
M_c(\theta)=
\left(
\begin{matrix}
1\ \ &0\\
0 \ \ &\sqrt{\frac{c(\theta-\alpha)}{\tilde{c}(\theta-\alpha)}}
\end{matrix}
\right).
\end{align}
Furthermore,
$(\alpha, {\tilde{A}_{|c|,E}})$ is homotopic to the identity in $C^{\omega}(\T, SL(2,\R))$. Indeed,
\begin{align*}
\tilde{A}_{|c|,E}(\theta,t)=\frac{1}{\sqrt{|c|(\theta)|c|(\theta-t\alpha)}}
\left(
\begin{matrix}
t(E-v(\theta))\ \ &-|c|(\theta-t\alpha)\\
|c|(\theta)   \ \ &0
\end{matrix}
\right),
\end{align*}
establishes a homotopy of $(\alpha, {\tilde{A}_{|c|,E}})$ to the constant real rotation by $\frac{\pi}{2}$ and hence to the identity matrix.

Therefore we can define the rotation number of the cocycle $(\alpha, \tilde{A}_{|c|, E})$ by (\ref{defrot}), denoted by $\rho(\alpha, {\tilde{A}_{|c|,E}})$. It is a non-increasing continuous function of $E$. Clearly, $\rho(\alpha, {\tilde{A}_{|c|,E}})$ is the rotation number associated to the operator:
\begin{align}\label{H|c|}
(H_{|c|}(\theta)u)_n=|c|(\theta+n\alpha) u_{n+1}+|c|(\theta+(n-1)\alpha)u_{n-1}+v(\theta+n\alpha) u_n.
\end{align}
Sometimes we will write $\rho_{|c|}(E)$ instead of $\rho(\alpha, \tilde{A}_{|c|,E})$ for simplicity.

Let $\mu_{c,\theta}$ be the spectral measure of $H_{c}(\theta)$ corresponding to $\delta_0$, namely for any Borel set $U$ we have
\begin{align*}
\mu_{c,\theta}(U)=(\delta_0, \chi_U(H_{c}(\theta))\delta_0).
\end{align*}
We define the {\it density of states measure} $dN_c$ by
\begin{align*}
dN_c(U)=\int_{\T}\mu_{c,\theta}(U)\ \mathrm{d}\theta.
\end{align*}
$N_c(E):=N_c(-\infty, E)$ is called the {\it integrated density of states (IDS)} of $H_{c}(\theta)$. 
It can be seen that $N_{|c|}(E)=N_{c}(E)$ since $H_{|c|}(\theta)$ and $H_{c}(\theta)$ differ by a unitary conjugation. 
Indeed, let $\{T_{\theta}\}_{\theta\in \T}$ be a family of unitary operators acting on $l^2(\Z)$ as follows
\begin{align}\label{Ttheta}
(T_{\theta} u)_n=\left\lbrace 
\begin{matrix}
\sqrt{\frac{\prod_{j=0}^{n-1} \tilde{c}(\theta+j\alpha)}{\prod_{j=0}^{n-1} c(\theta+j\alpha)}}u_n\ \ &n\geq 1,\\
u_n\ \ &n=0,\\
\sqrt{\frac{\prod_{j=n}^{-1}c(\theta+j\alpha)}{\prod_{j=n}^{-1}\tilde{c}(\theta+j\alpha)}}u_n\ \ &n\leq -1,
\end{matrix}
\right.
\end{align}
then 
\begin{align}\label{HcH|c|}
H_{|c|}(\theta)=T_{\theta}^{-1}H_c(\theta)T_{\theta}.
\end{align}
Combining (\ref{HcH|c|}) with the observation that $T_{\theta}\delta_0=\delta_0$ and $(T_{\theta}^{-1})^*=T_{\theta}$, we have that, for any Borel set $U$,
\begin{align}\label{a}
N_{|c|}(U)=&\int_{\T}(\delta_0, \chi_{U}(H_{|c|}(\theta))\delta_0)\ d\theta\\
=&\int_{\T}(\delta_0, T_{\theta}^{-1}\chi_{U}(H_c(\theta))T_{\theta}\delta_0)\ d\theta\notag\\
=&\int_{\T}(T_{\theta}\delta_0, \chi_{U}(H_c(\theta)) T_{\theta}\delta_0)\ d\theta\notag\\
=&\int_{\T}(\delta_0, \chi_{U}(H_c(\theta))\delta_0)\ d\theta\notag\\
=&N_c(U)\notag.
\end{align}

For the operator $H_{|c|}(\theta)$, the connection between its rotation number and integrated density of states is the following
\begin{align}\label{Nrho}
N_{|c|}(E)=1-2\rho_{|c|}(E).
\end{align}

The Aubry duality between $H_c$ and $\tilde{H}_c$ implies the following relation between their density of states measures, see e.g. a particular case of Theorem 2 in \cite{mz},\footnote{It can also be proved
  essentially by the argument, as above, that proves equality of
  $N_{|c|}$ and $N_c,$ replacing (\ref{HcH|c|}) with Lemma
  \ref{aubry}, where $H_c$ and $\tilde{H}_c $ are operators acting on
  $L^2(\T\times \Z)$  defined as direct integrals in
$\theta$ of $H_c(\theta)$ and $\tilde{H}_c(\theta).$  While
  $H_{|c|}(\theta)$ and $H_c(\theta)$ are unitary conjugate in each
  fiber, this is not essential, for the proof only uses
  $T\delta_0=\delta_0$ and unitarity of the conjugation.}
\begin{align}\label{IDSrelation}
dN_{c}(E)=dN_{\tilde{H}_c}(E).
\end{align}

\subsection{Extended Harper's model}\label{ehmsec}

We consider the extended Harper's model $\{H_{\lambda,\alpha,\theta}\}_{\theta \in \T}$.  
The spectrum of $H_{\lambda,\alpha,\theta}$ denoted by
$\Sigma_{\lambda, \alpha}$, does not depend on $\theta$.
Depending on the values of the parameters $\lambda_1, \lambda_2, \lambda_3$, we could divide the parameter space into three regions as shown in the picture below:
\begin{center}
\begin{tikzpicture}[thick, scale=1]

    \draw[->] (-10,-1) -- (-3,-1) node[below] {$\lambda_2$};
    \draw[->] (-10,-1) -- (-10,6) node[right] {$\lambda_1+\lambda_3$};
    \draw [ ] plot [smooth] coordinates { (- 7, 2) (-4, 5) };
    \draw [ ] plot [smooth] coordinates { (- 7, 2) (-7,-1) };
    \draw [ ] plot [smooth] coordinates { (-10, 2) (-7, 2) };

    \draw(-  4,   5) node [above] {$\lambda_1+\lambda_3=\lambda_2$};
    \draw(-  7,  -1) node [below] {$1$};
    \draw(- 10,   2) node [left]  {$1$};
    \draw(-9.2, 0.5) node [color=blue][right] {region I};
    \draw(-  5,   1) node [color=blue][right] {region II};
    \draw(-9.2, 3.6) node [color=blue][right] {region III};

    \draw(-  7, 0.5) node [color=red][right] {$L_{II}$};
    \draw(-8.5,   2) node [color=red][above] {$L_{I}$};
    \draw(-5.5, 3.7) node [color=red][left] {$L_{III}$};

\end{tikzpicture}
\end{center}

\begin{align*}
&region\ I    : 0<\max{(\lambda_1+\lambda_3,\lambda_2)}<1,\\
&region\ II   : 0<\max{(\lambda_1+\lambda_3, 1)} < \lambda_2,\\
&region\ III  : 0<\max{(1, \lambda_2)} < \lambda_1+\lambda_3.
\end{align*}
According to the action of the duality transformation $\sigma: \lambda=(\lambda_1, \lambda_2, \lambda_3)\rightarrow \hat{\lambda}=(\frac{\lambda_3}{\lambda_2}, \frac{1}{\lambda_2}, \frac{\lambda_1}{\lambda_2})$, region I and region II are dual to each other, and region III is  self-dual.
It turns out the spectrum $\Sigma_{\lambda, \alpha}$ of $H_{\lambda,\alpha, \theta}$ is related to the spectrum $\Sigma_{\hat{\lambda}, \alpha}$ of $H_{\hat{\lambda},\alpha,\theta}$ in the following way
\begin{align*}
\Sigma_{\lambda, \alpha}=\lambda_2 \Sigma_{\hat{\lambda}, \alpha}.
\end{align*}
By (\ref{IDSrelation}), the $\mathrm{IDS}$ $N(E)$ of $H_{\lambda,\alpha,\theta}$ coincides with the $\mathrm{IDS}$ $\hat{N}({E}/{\lambda_2})$ of $H_{\hat{\lambda},\alpha,\theta}$.

In \cite{Global}, Avila divides all the energies in the spectrum into three categories: super-critical, namely the energies with positive Lyapunov exponent; subcritical, namely the energies whose Lyapunov exponent of the phase-complexified cocycles are identically equal to zero in a neighborhood of $\epsilon=0$; critical, otherwise. Regarding EHM, the following theorem is shown in \cite{Jm}:
\begin{thm}\label{LE}\cite{Jm}
The extended Harper's model is {\it super-critical} in region I and {\it sub-critical} in region II. Indeed, if $\lambda$ belongs to region I,
\begin{itemize} 
\item for any $E\in \Sigma_{\lambda, \alpha}$, 
\begin{align}\label{ehmLyapunov}
L(E)\equiv L(\lambda)=\ln{\frac{1+\sqrt{1-4\lambda_1\lambda_3}}{\max{(\lambda_1+\lambda_3,\lambda_2)}+\sqrt{\max{(\lambda_1+\lambda_3,\lambda_2)}^2-4\lambda_1\lambda_3}}}>0.
\end{align}
\item $\hat{\lambda}$ belongs to region II, furthermore, for any $E\in \Sigma_{\hat{\lambda}, \alpha}$,  
\begin{align}\label{ehmregion2=0}
L(E)=L(\alpha, A_{d, E}(\cdot+i\epsilon))=L(\alpha, \tilde{A}_{|d|, E}(\cdot+i\epsilon))=0\ \ \mathrm{for}\ \ |\epsilon|\leq \frac{L(\lambda)}{2\pi},
\end{align} 
where $(\alpha, A_{d, E})$ is the directly related cocycle to $H_{\hat{\lambda}, \alpha, \theta}$ and $d(\theta)=\frac{\lambda_3}{\lambda_2}e^{-2\pi i (\theta+\frac{\alpha}{2})}+\frac{1}{\lambda_2}+\frac{\lambda_1}{\lambda_2}e^{2\pi i (\theta+\frac{\alpha}{2})}$.
\end{itemize}
\end{thm}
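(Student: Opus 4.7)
The plan is to combine Avila's global theory of one-frequency analytic $SL(2,\C)$ cocycles with the Thouless formula and Aubry duality. Two ingredients get assembled: a Jensen-style closed-form evaluation of $\int_\T \log|c_\lambda|\, d\theta$, and the subcriticality of the EHM cocycle throughout region II. These together pin down $L(\lambda)$ on region I through an Aubry-dual Thouless identity.

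First I would set $w=e^{2\pi i(\theta+\alpha/2)}$ and factor $c_\lambda(\theta)=w^{-1}(\lambda_3 w^2+\lambda_2 w+\lambda_1)$; the roots $w_\pm$ have product $\lambda_1/\lambda_3$ and sum $-\lambda_2/\lambda_3$, and elementary sign analysis under the hypothesis $\max(\lambda_1+\lambda_3,\lambda_2)<1$ places exactly one root outside the closed unit disk. Jensen's formula then yields $\int_\T\log|c_\lambda|\,d\theta=\log\frac{A+\sqrt{A^2-4\lambda_1\lambda_3}}{2}$ with $A=\max(\lambda_1+\lambda_3,\lambda_2)$. An identical computation for the Aubry-dual symbol $c_{\hat\lambda}$, whose associated quadratic $\lambda_1 w^2+w+\lambda_3$ has discriminant $1-4\lambda_1\lambda_3$, gives $\int_\T\log|c_{\hat\lambda}|\,d\theta=-\log\lambda_2+\log\frac{1+\sqrt{1-4\lambda_1\lambda_3}}{2}$.

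Next I would establish subcriticality in region II via Avila's global theory. Extending the normalized cocycle to $\theta+iy$, I would compute the large-$y$ asymptotics of the transfer matrix directly and read off that the slope of $y\mapsto L(\alpha,\tilde A_{|c_{\hat\lambda}|,\bar E}(\cdot+iy))$ at infinity equals $1$; evaluating the corresponding affine asymptote at $y=0$ produces the strictly negative intercept $-L(\lambda)/(2\pi)$ precisely when $\hat\lambda$ lies in region II. Since this $y$-Lyapunov exponent is nonnegative, convex, and piecewise affine with integer slopes, it must have a kink: on an initial interval $0\le y<h_0$ the function is identically zero (acceleration zero, hence subcritical in the sense of Avila), and matching the kink to the asymptote gives $h_0=L(\lambda)/(2\pi)$, yielding (\ref{ehmregion2=0}).

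Finally, from the Thouless identity $L(E)=\int\log|E-E'|\,dN_\lambda(E')-\int\log|c_\lambda|\,d\theta$, written for both $\lambda$ and $\hat\lambda$, together with $dN_\lambda(E)=dN_{\hat\lambda}(E/\lambda_2)$ from (\ref{IDSrelation}) and $\Sigma_{\lambda,\alpha}=\lambda_2\Sigma_{\hat\lambda,\alpha}$, I would subtract to eliminate the IDS convolution and obtain $L(E)=\log\lambda_2+\int\log|c_{\hat\lambda}|-\int\log|c_\lambda|$ for all $E\in\Sigma_{\lambda,\alpha}$, independent of $E$. Substituting the two Jensen values gives the claimed closed-form $L(\lambda)$. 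The main obstacle is establishing the subcriticality of region II without circular reasoning: the cleanest route relies on Avila's integer-slope theorem, so even a crude large-$y$ asymptotic is enough once one verifies that $L(\bar E,y)$ is not identically zero; an alternative route would construct analytic reducibility for a full-measure set of energies via Avila--Jitomirskaya-type quantitative duality, which is comparatively more delicate.
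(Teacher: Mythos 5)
Theorem \ref{LE} is not proved in this paper at all: it is imported verbatim from \cite{Jm}, so there is no internal proof to compare against. Your proposal is, in all essentials, a correct reconstruction of the Jitomirskaya--Marx argument: complexify the phase, use convexity together with quantization of the acceleration to show that the complexified Lyapunov exponent of the region-II cocycle vanishes for $|\epsilon|\le L(\lambda)/2\pi$ by matching against the explicitly computable large-$\epsilon$ affine asymptote, and then transfer the result to region I via the Thouless formula combined with the duality of the integrated density of states (\ref{IDSrelation}) and $\Sigma_{\lambda,\alpha}=\lambda_2\Sigma_{\hat\lambda,\alpha}$.

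Two caveats. First, your intermediate claim that the hypothesis places ``exactly one root outside the closed unit disk'' is false in general: when $\lambda_1+\lambda_3>\lambda_2$ and, say, $\lambda_1<\lambda_3$, both roots of $\lambda_3w^2+\lambda_2w+\lambda_1$ lie in the closed unit disk (and when $\lambda_2^2<4\lambda_1\lambda_3$ with $\lambda_1>\lambda_3$ both lie outside). The stated value of $\int_\T\log|c_\lambda|\,d\theta$ is nevertheless correct in every case --- in the regime $\lambda_1+\lambda_3>\lambda_2$ it reduces to $\log\max(\lambda_1,\lambda_3)$ --- so this is a slip in the justification rather than in the answer. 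Second, the step you yourself flag as the main obstacle is precisely the actual content of \cite{Jm}: Avila's quantization of acceleration is formulated for analytic $SL(2,\C)$ cocycles, whereas $A_{c,E}$ is not unimodular and the normalized cocycle $\tilde A_{|c|,E}$ is not analytic because of the $|c|$ factors. One must either extend the quantization to analytic $M_2(\C)$ cocycles with determinant not identically zero (which is what \cite{Jm} does), or work with $D_{c,E}(\cdot+i\epsilon)$ and subtract $\tfrac12\int_\T\log|\det D_{c,E}(\theta+i\epsilon)|\,d\theta$, keeping track of zeros of $c$ entering the strip; nonnegativity of the resulting quantity then holds because $L(\alpha,D)\ge\tfrac12\int\log|\det D|$. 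Granting that, your kink-matching argument (nonnegativity, evenness, convexity, eventual slope $1$ with negative intercept, integer slopes) correctly forces the exponent to vanish identically on $[0,L(\lambda)/2\pi]$, and the Thouless/IDS-duality subtraction then yields (\ref{ehmLyapunov}).
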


\section{Proof of Theorem \ref{main}}\label{mainsection}
The proof of Theorem $\ref{main}$ mainly relies on Lemmas \ref{winding=0pp} and \ref{windingneq0}.
\subsection{Proof of the first part of Theorem \ref{main}}
\begin{lemma}\label{winding=0pp}
Let $s(\theta)=c(\theta)e^{-2\pi i k_0 (\theta+\frac{\alpha}{2})}$, where $k_0=w(c)$. Then under the conditions of Theorem $\ref{main}$, the spectra of the dual Hamiltonians $\tilde{H}_{s}(x)$ are pure point for a.e. $x$.
\end{lemma}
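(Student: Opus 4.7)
The plan is to exploit the topological condition $w(s)=0$ to convert the hypothesized $L^2$-reducibility into an explicit construction of eigenfunctions for the dual operator $\tilde H_s$, then to apply the completeness argument of \cite{jk} to show that the family of eigenvectors exhausts the spectrum. Since $|s|=|c|$, the normalized cocycles coincide, $\tilde A_{|s|,E}=\tilde A_{|c|,E}$, so the reducibility hypothesis transfers verbatim to $s$: for $dN_s=dN_c$-a.e.\ $E$ there exists $B_E\in L^2(\T,\mathrm{SL}(2,\R))$ with $B_E(\theta+\alpha)\tilde A_{|c|,E}(\theta)B_E^{-1}(\theta)=R_{\rho(E)}$. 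The point of the particular choice of $s$ is that $w(s)=w(c)-k_0=0$, the decisive structural input: it makes $\sqrt{s/\tilde s}$, and hence the conjugation matrix $M_s$ of (\ref{conjDcDnor}), a genuine single-valued analytic function on $\T$.

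Combining the reducibility identity with (\ref{relateDcDnor}) and a scalar gauge absorbing the factor $\sqrt{|s(\theta)||s(\theta-\alpha)|}/s(\theta)$ (whose argument $-\arg s(\theta)$ has winding $-w(s)=0$ and is therefore cohomologous to a constant on $\T$, the absolute-value part being trivially a coboundary), the Jacobi cocycle $A_{s,E}$ becomes $L^2$-semiconjugate to a constant rotation matrix. The resulting vector-valued $L^2(\T)$ eigenfunction of the cocycle, Fourier-expanded, then yields via the reverse direction of the dynamical Aubry duality sketched in the introduction an $\ell^2(\Z)$ eigenvector of $\tilde H_s(x_E)$ at energy $E$, where $x_E$ is determined by $\rho(E)$ together with the constant shift coming from the absorbed scalar factor. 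Varying $E$ over a set of full $dN_s$-measure and using the monotone continuous dependence of $\rho$ on $E$, one obtains for a.e.\ $x$ a family of eigenvectors of $\tilde H_s(x)$ whose eigenvalues carry full $dN_c$-measure.

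For the completeness step, by (\ref{IDSrelation}) the measure $dN_c$ equals the averaged spectral measure $\int_\T \mu_{\tilde H_s(x),\delta_0}(\cdot)\,dx$, so a Fubini-type comparison in the spirit of \cite{jk} shows that for a.e.\ $x$ the pure-point measure supported on the constructed eigenvalues dominates $\mu_{\tilde H_s(x),\delta_0}$, forcing the latter to be pure point. The hard part is the eigenfunction construction: the Jacobi cocycle $A_{s,E}$ is not $\mathrm{SL}(2,\R)$-valued and, when $w(s)\neq 0$, is not even homotopic to the identity, so reducibility of the normalized cocycle does not automatically produce data usable for Aubry duality. It is precisely the single-valuedness of $M_s$ on $\T$ together with the solvability on $\T$ of the scalar cohomological equation absorbing $|s|/s$, both enforced by $w(s)=0$, that make the translation go through, and this is what underlies the dichotomy between the two cases of Theorem \ref{main}.
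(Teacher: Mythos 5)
Your outline follows essentially the same route as the paper: transfer the reducibility hypothesis using $|s|=|c|$, use $w(s)=0$ to turn the scalar gauge between $A_{s,E}$ and $\tilde A_{|s|,E}$ into a genuine coboundary on $\T$, read off dual eigenvectors from the Fourier coefficients of the resulting conjugacy (the equations (\ref{B}) in the paper), and conclude completeness by the covariance/projection argument of \cite{jk}. Two steps, however, are asserted with the wrong (or no) justification and deserve to be flagged. First, the claim that the argument of the scalar factor ``has winding $-w(s)=0$ and is therefore cohomologous to a constant on $\T$'' is not a valid inference: zero winding only makes $\arg s$ a well-defined analytic function on the strip; solving the cohomological equation analytically additionally requires the arithmetic hypothesis $\beta(\alpha)<h$ of Theorem \ref{main}, and one must also check that the relevant mean vanishes, which the paper gets from the symmetry $\tilde s(\theta)=s(-\theta-\alpha)$ (Lemma \ref{analytic}). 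The hypotheses you need are available, but as written the ``therefore'' is false for general irrational $\alpha$, and this is exactly the kind of small-denominator point the statement of Theorem \ref{main} is calibrated to.

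Second, you pass directly from $L^2$-reducibility to ``an eigenvector of $\tilde H_s(x_E)$ where $x_E$ is determined by $\rho(E)$,'' but this identification is not automatic: an arbitrary reducing conjugacy only pins down the constant rotation angle up to $\frac{k\alpha}{2}$ where $k$ is the degree of the conjugacy (formula (\ref{rhorhorelation})), and one must also discard the energies where the constant matrix is parabolic or $\pm\mathrm{Id}$. The paper handles this by first upgrading to $C^\omega$-reducibility via \cite{afk} and then renormalizing the conjugacy to degree zero (Lemma \ref{degree0reducible}) so that the constant is exactly $R_{\rho_{|s|}(E)}$. This is not cosmetic: the completeness argument requires the eigenvalue attached to phase $x+n\alpha$ to be $E(x+n\alpha)$ with $E=\rho^{-1}$, so an uncontrolled degree-dependent shift of the phase would break the bookkeeping in the Fubini step. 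With these two points repaired, your proposal coincides with the paper's proof.
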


\begin{proof}
We start with
\begin{lemma}\label{analytic}
Suppose $s(\theta)=\sum_{k\in \Z} \hat{s}_k e^{2\pi i k (\theta+\frac{\alpha}{2})}$, $\hat{s}_k\in \R$.
Suppose $s(\cdot)$ is analytic and nonzero on $|\mathrm{Im}\theta|\leq \frac{h}{2\pi}$ and $w(s)=0$. 
Then if $\beta(\alpha)<h$ there exists analytic funtion $f(\theta)$ such that
\begin{align*}
\frac{s(\theta)}{|s|(\theta)}=e^{f(\theta+\alpha)-f(\theta)}.
\end{align*}
\end{lemma}

\begin{proof}
Since $w(s(\cdot+i\epsilon))\equiv 0$, we can properly define $\log{s}(\theta)$ and $\arg{s(\theta)}$ on $|\mathrm{Im}\theta|\leq \frac{h}{2\pi}$. 
Now that obviously $\tilde{s}(\theta)=s(-\theta-\alpha)$, we have
\begin{align}\label{analytic1}
\int_{\T} \ln{|s(\theta)|}\ \mathrm{d}\theta=\int_{\T} \ln{|\tilde{s}(\theta)|}\ \mathrm{d}\theta.
\end{align}
and
\begin{align}\label{analytic2}
\int_{\T} \arg{s(\theta)}\ \mathrm{d}\theta-\int_{\T} \arg{\tilde{s}(\theta)}\ \mathrm{d}\theta=\int_{\T} \arg{s(\theta)}\ \mathrm{d}\theta-\int_{\T} \arg{s(-\theta-\alpha)}\ \mathrm{d}\theta=0.
\end{align}
Combining (\ref{analytic1}), (\ref{analytic2}) with $\beta(\alpha)<h$ we are able to solve a coholomogical equation, hence there exists an analytic function $g(\theta)$ so that
\begin{align*}
g(\theta+\alpha)-g(\theta)=\ln{s(\theta)}-\ln{\tilde{s}(\theta)}.
\end{align*}
This clearly implies
\begin{align*}
\frac{s(\theta)}{\tilde{s}(\theta)}=e^{g(\theta+\alpha)-g(\theta)}.
\end{align*}
Hence
\begin{align*}
\frac{s(\theta)}{|s|(\theta)}=e^{f(\theta+\alpha)-f(\theta)},
\end{align*}
where $f(\theta)=\frac{1}{2}g(\theta)$. $\hfill{} \Box$
\end{proof}

Now let us come back to the proof of Lemma \ref{winding=0pp}. 
We have for a.e. $E$ with respect to the density of states measure
$dN_c$ \footnote{It is the same as $dN_{|c|}=dN_{|s|}$, by
  (\ref{a}) and since
  $|c|=|s|$ on $\T^1.$} 
, there is $B_E\in L^2(\T, SL(2,\R))$ so that
\begin{align}\label{L2reduciblePc}
B_E^{-1}(\theta+\alpha)\tilde{A}_{|c|,E}(\theta)B_E(\theta)=R_{\rho_{|c|}(E)}.
\end{align}
Since for $\theta\in \T$, $\tilde{A}_{|c|, E}(\theta)=\tilde{A}_{|s|, E}(\theta)$, we have
\begin{align}\label{L2reduciblePs}
B_E^{-1}(\theta+\alpha)\tilde{A}_{|s|,E}(\theta)B_E(\theta)=R_{\rho_{|s|}(E)}.
\end{align}
Taking 
\begin{align*}
\tilde{B}_E(\theta)=\frac{1}{\sqrt{2i}}B_E(\theta)\left(\begin{matrix}
i\ \ &-i\\
1\ \ &1
\end{matrix}\right),
\end{align*}
we have,
\begin{align}\label{reducetorot}
\tilde{B}_E^{-1}(\theta+\alpha)
\tilde{A}_{|s|, E}(\theta)
\tilde{B}_E(\theta)=
\left(\begin{matrix}
e^{2\pi i \rho_{|s|}(E)}\ \ &0\\
0\ \ &e^{-2\pi i \rho_{|s|}(E)}
\end{matrix}\right).
\end{align}
By Lemma \ref{analytic}, there exists analytic $f(\theta)$ so that
$s(\theta)=|s|(\theta)e^{f(\theta+\alpha)-f(\theta)}$.
Then by (\ref{relateDcDnor}) and (\ref{reducetorot}), we have
\begin{align}\label{Asreducerot}
\left(
\begin{matrix}
e^{2\pi i \rho_{|s|}(E)}\ \ &0\\
0 \ \ &e^{-2\pi i \rho_{|s|}(E)}
\end{matrix}
\right)=&B^{-1}_E(\theta+\alpha)\tilde{A}_{|s|,E}(\theta)B_E(\theta)\\
=
&\frac{s(\theta)}{\sqrt{|s|(\theta)|s|(\theta-\alpha)}}\lbrace M_{s}(\theta+\alpha)B_E(\theta+\alpha)\rbrace^{-1} A_{s,E}(\theta)
M_s(\theta)B_E(\theta) \notag\\
=
&\left\lbrace 
\frac{M_s(\theta+\alpha)B_E(\theta+\alpha)e^{-\frac{f(\theta+\alpha)}{2}}}{\sqrt{|s|(\theta)}}
\right\rbrace ^{-1} 
A_{s,E}(\theta)
\left\lbrace
\frac{M_s(\theta)B_E(\theta)e^{-\frac{f(\theta)}{2}}}{\sqrt{|s|(\theta-\alpha)}} 
\right\rbrace. \notag
\end{align}
Let 
$
\tilde{D}_E(\theta)=
\left(
\begin{matrix}
D_{E,11}(\theta)  &D_{E,12}(\theta)\\
D_{E,21}(\theta)  &D_{E,22}(\theta)
\end{matrix}
\right):=\lbrace
\frac{M_s(\theta)B_E(\theta)e^{-\frac{f(\theta)}{2}}}{\sqrt{|s|(\theta-\alpha)}} 
\rbrace
$.
(\ref{Asreducerot}) yields that
\begin{align}\label{B}
(E-v(\theta))D_{E,11}(\theta)&=e^{2\pi i \rho_{|s|}(E)} s(\theta) D_{E,11}(\theta+\alpha)+e^{-2\pi i \rho_{|s|}(E)} \tilde{s}(\theta-\alpha) D_{E,11}(\theta-\alpha),\\
(E-v(\theta))D_{E,21}(\theta)&=e^{-2\pi i \rho_{|s|}(E)} s(\theta)D_{E,21}(\theta+\alpha)+e^{ 2\pi i \rho_{|s|}(E)}\tilde{s}(\theta-\alpha)  D_{E,21}(\theta-\alpha).
\end{align}
We now can follow the argument of \cite{jk}. We are going to show that
\begin{lemma}\label{complete}
For a.e. $x$, $\tilde{H}_s(x)$ has a complete set of normalized eigenfunctions with simple eigenvalues.
\end{lemma}
\begin{proof}
As mentioned, this proof is essentially from \cite{jk}, we include it here for completeness.
Since $\rho_{|s|}: \R\rightarrow [0,\frac{1}{2}]$ is bijective on the spectrum, for each $x\in [0,\frac{1}{2}]$ there exists $E(x)$ such that $\rho_{|s|}(E(x))=x$. 
By (\ref{B}) and a straightforward computation, there is $F_1$ with $|F_1|=0$ so that for $x\in [0,\frac{1}{2}]\setminus F_1$, $\tilde{H}_s(x)$ has a normalized eigenfunction 
$\{u_k(x)\}_k=\left\lbrace\frac{\hat{D}_{E(x),11}(k)}{\|\hat{D}_{E(x),11}\|_{L^2(\T)}}\right\rbrace_k$ at energy $E(x)$.
Also for $x\in [-\frac{1}{2}, 0]\setminus F_2$, $|F_2|=0$, $\tilde{H}_s(x)$ has a normalized eigenfunction 
$\{u_k(x)\}_k=\left\lbrace\frac{\hat{D}_{E(x),12}(k)}{\|\hat{D}_{E(x),12}\|_{L^2(\T)}}\right\rbrace_k$ at energy $E(-x)$.
Let 
\begin{align}\label{F}
F=(F_1+\Z\alpha) \cup (F_2+\Z\alpha) \cup \{x\in [-\frac{1}{2},\frac{1}{2}]\ |\ 2x\in \Z\alpha+\Z\}.
\end{align} 
Clearly, $|F|=0$.
Now for every $x\in F^c$, every $n\in \Z$, $\tilde{H}_s(x+n\alpha)$ has a normalized eigenfunction $\{u_k(x+n\alpha)\}_k$ at energy $E(x+n\alpha)$. 
Also for different $m$ and $n$, $E(x+m\alpha)\neq E(x+n\alpha)$, since otherwise we would have $x+m\alpha=-(x+n\alpha)\ \mod\Z$, which is impossible due to our definition of $F$, (\ref{F}).
Let $E_n(x):=E(x+n\alpha)$, $P_n(x)$ be the spectral projection of $\tilde{H}_s(x)$ onto $E_n(x)$ and $P(x)=\sum_{n\in \Z}P_n(x)$.
Notice that $\tilde{H}_s(x+n\alpha)=T^{-n}\tilde{H}_s(x)T^n$, where $(Tu)_k=u_{k-1}$. Thus
\begin{align*}
P_n(x) T^nu(x+n\alpha)=T^n u(x+n\alpha),
\end{align*}
in other words, $T^n u(x)$ is in the range of $P_n(x-n\alpha)$. 
Thus for any $l\in \Z$, $\langle \delta_l, P_n(x-n\alpha)\delta_l\rangle \geq |\langle\delta_l, T^n u(x)\rangle|^2$, therefore $\sum_{n\in \Z} \langle\delta_l, P_n(x-n\alpha)\delta_l\rangle \geq 1$.
We have
\begin{align*}
1\geq \int_{F^c}\langle \delta_l, P(x)\delta_l \rangle=\int_{F^c} \sum_{n\in \Z} \langle\delta_l, P_n(x-n\alpha)\delta_l\rangle \geq 1.
\end{align*}
This implies for a.e. $x$, $\langle \delta_l, P(x)\delta_l \rangle=1$ for every $l$, therefore $P(x)=1$.
Thus for a.e. $x\in\T$, $\cup_{n\in \Z} T^n u(x+n\alpha)$ forms a complete set of eigenfunctions and $\cup_{n\in \Z} E(x+n\alpha)$ forms the eigenvalues. 
$\hfill{} \Box$
\end{proof}

Note that this immediately implies Lemma \ref{winding=0pp}, and thus
also the first part of Theorem \ref{main}, since when $w(c)=0$, we have $\tilde{H}_s=\tilde{H}_c$. $\hfill{} \Box$
\end{proof}

As a byproduct of full measure $L^2$-degree 0 reducibility (\ref{L2reduciblePs}), we could obtain the following result about the absolute continuity of the density of states measure which will play an important role in the proof of Lemma \ref{windingneq0}.
\begin{lemma}\label{IDSac}
The density of states measure of $H_c$ (and thus of $H_s$, $\tilde{H}_c$ and $\tilde{H}_s$) is absolutely continuous.
\end{lemma}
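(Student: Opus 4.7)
\medskip
\noindent
\textbf{Proof plan of Lemma~\ref{IDSac}.} The plan is to leverage the full-measure $C^\omega$-reducibility established in Lemma~\ref{degree0reducible}: there is a set $U$ with $dN_{|s|}(U) = 1$ such that for every $E \in U$ we have
\[
\tilde B_E^{-1}(\theta+\alpha)\,\tilde A_{|s|,E}(\theta)\,\tilde B_E(\theta) = R_{\rho_{|s|}(E)}
\]
for some $\tilde B_E \in C^\omega(\T,\mathrm{SL}(2,\R))$. By~(\ref{Nrho}) we have $N_{|s|}(E) = 1 - 2\rho_{|s|}(E)$ with $\rho_{|s|}$ monotone, so it suffices to prove that $\rho_{|s|}$ is absolutely continuous in $E$, equivalently that $dN_{|s|}$ is absolutely continuous with respect to Lebesgue measure.

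First I would observe that, for $E \in U$, reducibility to a constant rotation produces a uniform bound on the normalized cocycle:
\[
\|(\tilde A_{|s|,E})_n(\theta)\| \leq \|\tilde B_E\|_0^{\,2}
\qquad \text{for all } n \in \Z,\ \theta \in \T.
\]
In particular $L(\alpha,\tilde A_{|s|,E})=0$ on $U$, and through the gauge relation~(\ref{relateDcDnor}) the transfer matrices for $H_{|s|}(\theta)$ at energy $E$ are bounded uniformly in $n$ and $\theta$ as well, using that $|s|(\theta)$ is bounded above and below on $\T$.

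Next I would invoke a Kotani-type mechanism to convert this boundedness into absolute continuity of the spectral measures. Since $|s|$ is bounded away from $0$, Gilbert--Pearson subordinacy theory applies to the Jacobi operator $H_{|s|}(\theta)$ in its standard form, and uniform boundedness of transfer matrices at energy $E$ shows that no nonzero solution of $H_{|s|}(\theta)u=Eu$ is subordinate; hence, for a.e.\ $\theta \in \T$, the spectral measure $\mu_{|s|,\theta}$ is purely absolutely continuous on $U$. Since $dN_{|s|}(U)=1$, the averaging identity
\[
dN_{|s|} = \int_\T \mu_{|s|,\theta}\, d\theta
\]
forces $dN_{|s|}$ itself to be absolutely continuous. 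To conclude the lemma, I would note that for each fixed $\theta$ the operators $H_c(\theta)$, $H_s(\theta)$, $H_{|c|}(\theta)$, $H_{|s|}(\theta)$ are pairwise unitarily equivalent through the diagonal gauge transformation $M_c(\theta)$ of~(\ref{conjDcDnor}) (and its $s$- and absolute-value analogs), so their density of states measures all coincide; combined with the Aubry-duality identity~(\ref{IDSrelation}) this yields $dN_c = dN_s = dN_{|c|} = dN_{|s|} = dN_{\tilde H_c} = dN_{\tilde H_s}$, all absolutely continuous.

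The main obstacle is the bridge between the measure-theoretic hypothesis (reducibility for $dN_{|s|}$-a.e.\ $E$) and the Lebesgue-theoretic conclusion (absolute continuity of $dN_{|s|}$), since subordinacy-type outputs are naturally formulated against Lebesgue measure. The averaging identity handles this cleanly once one is careful that the bounded-cocycle set $U$ is a full $dN_{|s|}$-measure common support for almost every $\mu_{|s|,\theta}$. As a cleaner alternative, one could differentiate the reducibility equation in $E$ to represent $\rho_{|s|}'(E)$ as an integral involving $\tilde B_E$ and $\partial_E \tilde A_{|s|,E}$, obtaining a locally integrable density for $d\rho_{|s|}$ directly; this route, essentially contained in \cite{afk}, also delivers absolute continuity without passing through spectral theory.
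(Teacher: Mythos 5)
Your proposal is correct and follows essentially the same route as the paper: full-measure $C^\omega$-reducibility from Lemma~\ref{degree0reducible} gives bounded transfer matrices on a set $U$ with $dN(U)=1$, Khan--Pearson subordinacy theory then shows the singular part of each fiber spectral measure gives zero weight to $U$, and averaging over $\theta$ together with the unitary equivalences and the duality identity~(\ref{IDSrelation}) yields absolute continuity of all the density of states measures. The alternative route you sketch at the end (differentiating the reducibility equation to get a density for $d\rho$) is not what the paper does, but the main argument matches.
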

\begin{proof}
By \cite{afk} (see Lemma 1.4 therein), if $(\alpha, \tilde{A}_{|s|, E})$ is $L^2$-degree 0 reducible for a.e. $E$ with respect to the density of states measure, then $(\alpha, \tilde{A}_{|s|, E})$ is $C^\omega$ rotation-reducible for $E\in U$ where $U$ is a set with $dN_{|s|}(U)=1$. 
By subordinacy theory \cite{subordinate}, $(\alpha, \tilde{A}_{|c|, E})$ being $C^{\omega}$ rotation-reducible for $E\in U$ implies that for any $\theta$, the singular part of the spectral measure $\mu_{|c|, \theta}$ of $H_{|c|}(\theta)$ gives zero weight to $U$. 
This implies, by Footnote 1, $dN_{|c|}=dN_{c}=dN_{s}$ are absolutely continuous. 
By (\ref{IDSrelation}), we get that $dN_{\tilde{H}_c}=dN_{\tilde{H}_s}$, the density of states measures of the dual Hamiltonians $\tilde{H}_c$ and $\tilde{H}_s$, are absolutely continuous.  $\hfill{} \Box$
\end{proof}

\subsection{Proof of the second part of Theorem \ref{main}}
\begin{lemma}\label{windingneq0}
If $w(c)=k_0\neq 0$, the spectra of $\tilde{H}_c$ are purely absolutely continuous for a.e. $x$.
\end{lemma}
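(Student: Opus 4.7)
The plan is to reduce the case $w(c) \neq 0$ to Lemma \ref{winding=0pp} via a \emph{non-fiberwise} unitary conjugacy on the global space $L^2(\T \times \Z)$, and to leverage the absolute continuity of the density of states provided by Lemma \ref{IDSac}. Set $s(\theta) := c(\theta) e^{-2\pi i k_0(\theta + \alpha/2)}$, so $w(s) = 0$. On the primal side, $H_c(\theta)$ and $H_s(\theta)$ are fiberwise unitarily conjugate via the phase gauge $(V_\theta u)_n := \exp(-2\pi i k_0[n(\theta + \alpha/2) + \alpha n(n-1)/2]) u_n$; this follows from a direct computation based on $s(\theta + n\alpha) = c(\theta + n\alpha) e^{-2\pi i k_0(\theta + n\alpha + \alpha/2)}$, where the specific quadratic-in-$n$ correction is dictated by matching the hopping coefficients at every site. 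Integrating, $V := \int^\oplus_\theta V_\theta\, d\theta$ intertwines $\int^\oplus H_c(\theta)\, d\theta$ and $\int^\oplus H_s(\theta)\, d\theta$ on $L^2(\T_\theta; \ell^2(\Z))$.

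Transport through the Aubry duality unitary $\mathcal{F}$ identifying $\int^\oplus H_c(\theta)\, d\theta$ with $\int^\oplus \tilde{H}_c(x)\, dx$ (and analogously for $s$). Set $\mathcal{U} := \mathcal{F} V \mathcal{F}^{-1}$; then $\mathcal{U} \int^\oplus \tilde{H}_s(x)\, dx \, \mathcal{U}^{-1} = \int^\oplus \tilde{H}_c(x)\, dx$. The crucial feature is that $\mathcal{U}$ is \emph{not} a direct integral in $x$: the $\theta$-linear factor $e^{-2\pi i k_0 n\theta}$ in $V_\theta$ becomes a shift by multiples of $\alpha$ in the $x$-variable after applying $\mathcal{F}$, while the $\theta$-independent Gauss-sum-type quadratic factor $e^{-\pi i k_0 \alpha n(n-1)}$ persists as multiplication on the $\ell^2(\Z)$ fiber. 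Hence $\mathcal{U}$ genuinely mixes different $x$-fibers, which is precisely the mechanism that allows pure-point spectrum upstream to become absolutely continuous spectrum downstream.

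By Lemma \ref{complete}, for a.e. $x$ the operator $\tilde{H}_s(x)$ has the complete orthonormal eigenbasis $\{T^n u(\cdot; x + n\alpha)\}_{n \in \Z}$. Transport this basis through $\mathcal{U}$: using rational approximants $p_q/q$ of $\alpha$, the quadratic phase becomes exactly $q$-cyclic in $n$, and the error from substituting $p_q/q$ for $\alpha$ is controlled by the exponential decay of the $\tilde{H}_s$-eigenfunctions (justified by $\beta(\alpha) < h$). Decomposing the image of the eigenbasis according to the resulting $q$-cyclic symmetry yields a family $\{\psi^{q, \ell, j}_x \in \ell^2(\Z)\}$, indexed by the approximant denominator $q$, a residue class $\ell \in \{0, \ldots, q-1\}$, and the eigenfunction label $j$. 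The decisive covariance, obtained by combining the explicit form of $\mathcal{U}$ with $\tilde{H}_c(x + \alpha) = T^{-1} \tilde{H}_c(x) T$, is that the spectral measure $\mu^{q, \ell, j}_x$ of $\tilde{H}_c(x)$ at $\psi^{q, \ell, j}_x$ is \emph{constant in $x$} for a.e. $x$. Therefore $\mu^{q, \ell, j}_x$ equals its own $x$-average, which is the spectral measure of $\psi^{q, \ell, j}$ (regarded as a vector in $L^2(\T \times \Z)$) with respect to $\int^\oplus \tilde{H}_c(x)\, dx$. The latter operator has purely absolutely continuous spectrum by Lemma \ref{IDSac} together with the subordinacy argument behind it, pushed through $\mathcal{F}$; hence $\mu^{q,\ell,j}_x$ is absolutely continuous for a.e. $x$. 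Finally, unitarity of $\mathcal{U}$ combined with completeness of the $\tilde{H}_s$-eigenbasis upgrades this to cyclicity of $\{\psi^{q, \ell, j}_x\}$ for $\tilde{H}_c(x)$ for a.e. $x$, yielding purely absolutely continuous spectrum.

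\textbf{Main obstacle.} The crux is showing that the $\mu^{q, \ell, j}_x$ are genuinely $x$-constant, rather than merely $\alpha$-shift covariant. The Gauss-sum quadratic phase in $\mathcal{U}$ prevents any fiberwise transport of eigenvectors, and must be tamed by the rational-approximant decomposition; one then has to collapse the residual $\alpha$-shift covariance to outright $x$-constancy by averaging over the finite cyclic group of order $q$ and passing to the limit $q \to \infty$. The hypothesis $\beta(\alpha) < h$ supplies exactly the quantitative control, via exponential decay of the $\tilde{H}_s$-eigenfunctions inherited from the reducibility data, needed to justify the limit.
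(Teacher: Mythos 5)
Your high-level architecture does match the paper's: conjugate $\tilde H_c$ to $\tilde H_s$ by a unitary on $L^2(\T\times\Z)$ that is \emph{not} a direct integral in $x$, push a complete eigenbasis of $\tilde H_s(x)$ through it, show the resulting spectral measures are $x$-independent, and deduce absolute continuity from Lemma \ref{IDSac}. But the two steps that carry all the weight are not actually done. First, your family $\psi^{q,\ell,j}_x$ is built from rational approximants $p_q/q$ of $\alpha$ and a ``$q$-cyclic decomposition'' of the quadratic phase; this construction is never carried out, and it is not the right one. The correct family is $\psi^{q,l,j}_x=(\mathcal U f_q v_{l,j})(x,\cdot)$, where $\{f_q\}$ is simply an orthonormal basis of $L^2(\T)$ and $\{v_{l,j}\}$ is a \emph{covariant measurable enumeration} of the $\tilde H_s$-eigenfunctions (following Gordon--Jitomirskaya--Last--Simon); no rational approximation, no limit $q\to\infty$, and no decay estimates on eigenfunctions are needed, and the hypothesis $\beta(\alpha)<h$ plays no role at this stage. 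You also never address the need for such a covariant measurable enumeration, without which the orthogonality relations between eigenfunctions over different fibers cannot even be stated.

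Second --- and this is the gap you yourself flag as the ``main obstacle'' --- the $x$-constancy of $\mu^{q,\ell,j}_x$ is only asserted, with a proposed remedy (``averaging over the finite cyclic group of order $q$ and passing to the limit'') that is not executed and whose feasibility is unclear; mere $\alpha$-shift covariance plus ergodicity does not give constancy of a measure-valued function without further input. The actual mechanism is different: one computes, for each $m\neq 0$, the Fourier coefficient $\int_\T e^{2\pi i mx}\int F\,d\mu^{q,l,j}_x\,dx$ and shows it vanishes. The commutation relations of the duality transform and the quadratic-phase unitary convert multiplication by $e^{2\pi i mx}$ into the fiber shift $S_{mk_0}$, so the coefficient reduces to an integral involving $\sum_n \overline{v_{l,j}(x-mk_0\alpha,\,n+mk_0)}\,v_{l,j}(x,n)$, which vanishes by simplicity of the eigenvalues together with covariance of the enumeration: eigenfunctions over the distinct fibers $x$ and $x-mk_0\alpha$ are orthogonal. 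This is precisely where $k_0\neq 0$ enters, and without this (or an equivalent) argument the proof does not close. A smaller but real issue: the purely absolutely continuous nature of the direct-integral operator $\int^\oplus\tilde H_c(x)\,dx$ is not an immediate consequence of Lemma \ref{IDSac}; one must expand the spectral measure of $f_qv_{l,j}$ in the eigenbasis to obtain $\int_\T\chi_{\mathcal A}(e_{l,j}(x))|f_q(x)|^2\,dx$ and then use absolute continuity of $dN_{\tilde H_s}$ to conclude $\chi_{\mathcal A}(e_{l,j}(x))=0$ for a.e.\ $x$ whenever $|\mathcal A|=0$; your appeal to ``subordinacy pushed through $\mathcal F$'' skips this step.
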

\subsection*{Proof of Lemma \ref{windingneq0}}

The plan of the proof is to find a unitary transformation of
$L^2(\T\times \Z)$ relating $\tilde{H}_c$ to $\tilde{H}_s,$ and prove that the (already
established in the first part) a.e. pure point spectrum of $\tilde{H}_s(x)$ for a.e. $x$ leads to
absolutely continuous spectrum of $\tilde{H}_c(x)$ for a.e. $x.$

 Let us introduce two unitary transformations on $\mathcal{H}=L^2(\T\times \Z)$,
\begin{align}
(U_R\psi)(x, n)&=\int_{0}^{1}e^{2\pi i \beta n}\sum_{p\in \Z}\psi(\beta, p)e^{2\pi i p(x+n\alpha)}\ d\beta. \label{UR}\\
(U_{k}\psi)(x, n)&=e^{2\pi i (nk(\frac{n\alpha}{2}+x))}\psi(x, n).\label{UGk}
\end{align}
$U_R$, first introduced in \cite{cd},  is just the Aubry duality transformation, also given in a more
compact form as 
\begin{align} U_R\psi(x,n)=\hat{\psi}(n,x+\alpha n), \end{align}
  where $\hat{\psi}\in L^2(\Z\times \T)$ is the Fourier transform. Operator $U_k$, first introduced
in \cite{mz},  is unitary on each
fiber. We also have
\begin{align}\label{UR-1}
(U_{R}^{-1}\psi)(x, n)&=\int_{0}^{1}e^{-2\pi i \beta n}\sum_{p\in \Z}\psi(\beta, p)e^{-2\pi i p(x+n\alpha)}\ d\beta.
\end{align}

1D one-frequency quasiperiodic operators arise as reductions of 2D Hamiltonians in a
uniform magnetic field with the spectral properties of the 2D model
encoded in the properties of the entire phase-dependent family of
$H(\theta)$. 
Both $U_R$ and $U_k$arise from the  gauge transformations of the
original 2D model.
Namely, the $U_R$ transformation can be interpreted as a gauge transformation
corresponding to rotating the original 2D lattice $\Z_x\times \Z_y$ by the angle $\pi /2$, 
and the $U_k$ transformation can be interpreted as transformation
corresponding to shifting each 1D sub-lattice $\Z_x$ horizontally by $ky$ units.
One could refer to \cite{mz} for more details.

Now let us define $(S_m\psi)(x, n)=\psi(x+m\alpha, n-m)$. Then $(S_mv_{l, j})(x, n)=v_{l, j}(x+m\alpha, n-m)=v_{l, m+j}(x, n)$.
\begin{lemma}\label{URR-kS}
The following hold
\begin{align}
(U_RS_l\psi)(x, n)=&e^{2\pi i lx}(U_R\psi)(x, n) \label{URS}\\
(U_{R}^{-1}S_l\psi)(x, n)=&e^{-2\pi i lx}(U_{R}^{-1}\psi)(x, n) \label{UR-S}\\
(U_{k}S_l\psi)(x, n)=&e^{2\pi i lk(x+\frac{l\alpha}{2})}(S_lU_{k}\psi)(x, n). \label{UkS}
\end{align}
\end{lemma}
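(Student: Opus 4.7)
All three identities are direct unwindings of the definitions; the only tools required are a re-indexing of the Fourier sum and a change-of-variables in the integral over $\T$. I describe the calculation in each case; no conceptual obstacle is expected.

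For \eqref{URS}, I would start from
\begin{align*}
(U_R S_l \psi)(x,n) = \int_0^1 e^{2\pi i \beta n}\sum_{p\in\Z}\psi(\beta+l\alpha,p-l)\, e^{2\pi i p(x+n\alpha)}\,d\beta,
\end{align*}
substitute $p' = p-l$ in the sum to pull out a factor $e^{2\pi i l(x+n\alpha)}$, then substitute $\beta' = \beta+l\alpha$ in the integral, which is legitimate because the full integrand (sum included) is $1$-periodic in $\beta$. This change produces a compensating factor $e^{-2\pi i l n \alpha}$. Combined with $e^{2\pi i l n \alpha}$ from the first substitution, these cancel and leave $e^{2\pi i l x}(U_R\psi)(x,n)$, as claimed.

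The identity \eqref{UR-S} is obtained by the same argument, with both substitutions producing their sign-flipped counterparts; the phases still cancel, leaving $e^{-2\pi i l x}$ out front.

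For \eqref{UkS}, I would compute both sides directly. The left side equals $e^{2\pi i n k(n\alpha/2 + x)}\,\psi(x+l\alpha,n-l)$. The right side equals
\begin{align*}
e^{2\pi i l k(x + l\alpha/2)}\,(U_k\psi)(x+l\alpha,n-l)
= e^{2\pi i l k(x+l\alpha/2)}\, e^{2\pi i (n-l) k\bigl((n-l)\alpha/2 + x + l\alpha\bigr)}\,\psi(x+l\alpha, n-l).
\end{align*}
Expanding the exponent on the right,
\begin{align*}
l k\bigl(x + \tfrac{l\alpha}{2}\bigr) + (n-l) k\bigl(\tfrac{(n+l)\alpha}{2} + x\bigr) = n k x + \tfrac{k\alpha}{2}\bigl(l^2 + n^2 - l^2\bigr) = n k\bigl(x + \tfrac{n\alpha}{2}\bigr),
\end{align*}
which matches the exponent on the left, so the two sides agree pointwise. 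This completes the verification. The step that requires the mildest care is the substitution $\beta \mapsto \beta + l\alpha$ in the proof of \eqref{URS}-\eqref{UR-S}, since one must note that after restoring the period the integrand has the same form in $\beta'$ — but this is immediate from the definition of $U_R$.
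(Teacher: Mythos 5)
Your computation is correct and is exactly the "straightforward computation" the paper leaves to the reader: for \eqref{URS}--\eqref{UR-S} the re-indexing $p\mapsto p-l$ and the periodic shift $\beta\mapsto\beta+l\alpha$ produce the cancelling phases $e^{\pm 2\pi i ln\alpha}$ as you say, and for \eqref{UkS} your exponent bookkeeping $l^2+(n^2-l^2)=n^2$ checks out. Nothing further is needed.
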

\begin{proof}
Straightforward computation. $\hfill{}\Box$
\end{proof}

Define operators $H_c,$ $\tilde{H}_c$ 
as acting on $\mathcal{H}$ via direct integrals in
$x$ of $H_c(x)$ and $\tilde{H}_c(x).$ Then one way to formulate the
Aubry duality is

\begin{lemma}\label{aubry}
\begin{align}\label{aaubry}
\tilde{H}_c=U_{R}^{-1}H_c U_{R}.
\end{align}
\end{lemma}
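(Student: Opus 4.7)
The plan is to verify $H_c U_R = U_R \tilde H_c$ (equivalent to the stated identity upon applying $U_R^{-1}$ from the left) by direct computation on $\mathcal{H} = L^2(\T\times\Z)$, using the explicit formulas (\ref{UR}), (\ref{UR-1}) for $U_R$, $U_R^{-1}$ and the Fourier expansions of $c, \tilde c, v$.

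The first step is to check that $U_R$ is unitary on $\mathcal{H}$. This follows from the defining formulas together with the orthogonality relations $\int_0^1 e^{2\pi i\beta(n-n')}\,d\beta = \delta_{n,n'}$ and $\sum_p e^{2\pi i p(y-y')} = \delta(y-y')$ on $\T$, which give $U_R U_R^{-1} = U_R^{-1} U_R = \mathrm{Id}$ upon substitution (the twist $e^{2\pi ipn\alpha}$ inside the exponential is a fibrewise unitary change of variable and causes no trouble).

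The substantive step is to compute $(H_c U_R\phi)(x,n)$ term by term. Writing $c(\theta) = \sum_k \hat c_k e^{2\pi ik(\theta+\alpha/2)}$, $\tilde c(\theta) = \sum_k \hat c_k e^{-2\pi ik(\theta+\alpha/2)}$, $v(\theta) = \sum_k \hat v_k e^{2\pi ik\theta}$ and plugging in the definition of $U_R$, each of the three pieces $c(x+n\alpha)(U_R\phi)(x,n+1)$, $\tilde c(x+(n-1)\alpha)(U_R\phi)(x,n-1)$ and $v(x+n\alpha)(U_R\phi)(x,n)$ becomes an integral in $\beta$ of a double sum over the Fourier index $k$ of the coefficient and the index $p$ from $U_R$. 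The reindexing $q = k+p$ in the first term and $q = p-k$ in the second collects everything in the form $\int e^{2\pi i\beta n}\sum_q[\cdots]e^{2\pi iq(x+n\alpha)}\,d\beta$ required by $U_R$; the $e^{\pm 2\pi i\beta}$ factors from the shifts $n\mapsto n\pm 1$ and the $e^{\mp 2\pi ik\alpha/2}$ factors arising from the $+\alpha/2$ convention in $c$ (and the matching $-\alpha/2$ convention in $\tilde c$) combine to produce exactly the coefficients $d_k(c,v)(\beta) = \hat c_k e^{2\pi i(\beta-k\alpha/2)} + \hat v_{-k} + \hat c_{-k}e^{-2\pi i(\beta-k\alpha/2)}$. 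Comparing this with the analogous direct expansion of $(U_R\tilde H_c\phi)(x,n)$ yields the desired identity.

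The only real obstacle is the careful bookkeeping of the $\alpha/2$-phase factors coming from the conventions in the Fourier expansions of $c$ and $\tilde c$, together with the $e^{2\pi ip\alpha}$ shifts produced by evaluating $(U_R\phi)$ at $n\pm 1$; beyond this index chasing, the lemma is a standard restatement of Aubry duality at the operator level, cf.\ \cite{cd,mz}.
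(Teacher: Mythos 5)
Your computation is correct and is essentially the paper's own argument: the proof of Lemma~\ref{aubry} in the text consists of the single line ``A computation using Lemma~\ref{URR-kS}'', and your Fourier-expansion bookkeeping is exactly the computation being alluded to (Lemma~\ref{URR-kS} merely packages the same shift/phase intertwining relations that you verify by hand). The one detail to watch in the reindexing is that the collected phases give the dual coefficients evaluated along the orbit, i.e. $d_k(c,v)(\beta+q\alpha)$ acting at site $q$ rather than $d_k(c,v)(\beta)$ uniformly in $q$ --- which is as it must be, since $\tilde H_c(x)$ is a genuinely quasiperiodic long-range operator and not a convolution.
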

\begin{proof}
A computation using Lemma \ref{URR-kS}. $\hfill{}\Box$
\end{proof}

 Now we establish a connection between $\tilde{H}_s$ and
 $\tilde{H}_c.$ It is given by the following 
\begin{lemma}\label{AA}
\begin{align}\label{conjHsHc}
\tilde{H}_c=(U_{R}^{-1}U_{k_0}U_R)\tilde{H}_s (U_{R}^{-1}U_{k_0}U_R)^{-1}.
\end{align}
\end{lemma}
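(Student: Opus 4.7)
The plan is to use Aubry duality (Lemma~\ref{aubry}) to transport the claim back to the primal side, where it reduces to a gauge (multiplication) conjugacy between $H_c$ and $H_s$ on each fiber.

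First, apply Lemma~\ref{aubry} both to $c$ and to $s$. Since $\hat{s}_k = \hat{c}_{k+k_0}\in\mathbb{R}$ and $s\in C^{\omega}(\T)$, the operator $H_s$ satisfies the same general hypotheses as $H_c$, so $\tilde H_c = U_R^{-1} H_c U_R$ and $\tilde H_s = U_R^{-1} H_s U_R$. Substituting these into the claimed identity \eqref{conjHsHc} and cancelling the outer $U_R^{\pm 1}$, the statement reduces to the primal identity
\[ H_c = U_{k_0}\,H_s\,U_{k_0}^{-1}. \]

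Next, verify this identity fiberwise in $x$. For each fixed $x$, $U_{k_0}$ acts on $\ell^2(\Z)$ as multiplication by the diagonal unitary $\mu_n(x)=e^{2\pi i n k_0(n\alpha/2 + x)}$. Conjugating the Jacobi matrix $H_s(x)$ by this diagonal unitary preserves its diagonal, so $v(x+n\alpha)$ is unchanged, consistent with the common potential in $H_c$ and $H_s$. Each off-diagonal entry is multiplied by a phase $\mu_n\mu_{n\pm 1}^{-1}$. A short computation shows that this phase is precisely the factor relating the hopping coefficients $s$ and $c$ at the relevant argument via $s(\theta)=c(\theta)\,e^{-2\pi i k_0(\theta+\alpha/2)}$ (and $\tilde s(\theta)=\tilde c(\theta)\,e^{2\pi i k_0(\theta+\alpha/2)}$). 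Consequently the super- and sub-diagonals of $U_{k_0}H_s(x)U_{k_0}^{-1}$ become $c(x+n\alpha)$ and $\tilde c(x+(n-1)\alpha)$, which are exactly those of $H_c(x)$.

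The only real obstacle is the bookkeeping of phases in the fiberwise step; everything else is a direct application of Aubry duality and the definition of $s$. Alternatively, one can expand $H_c$ and $H_s$ as Fourier series of shifts $S_l$ weighted by $\hat c_l,\hat s_l$ and read off the identity from \eqref{URS}--\eqref{UkS} of Lemma~\ref{URR-kS}: the relation $\hat s_l=\hat c_{l+k_0}$ represents a reindexing of the shift sum, and the phase in \eqref{UkS} absorbs exactly the factor $e^{2\pi i l k_0(x+l\alpha/2)}$ produced by this reindexing after conjugation by $U_R^{-1}U_{k_0}U_R$. Either route yields \eqref{conjHsHc}.
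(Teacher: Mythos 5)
Your overall strategy is sound and supplies exactly the computation the paper leaves implicit (the paper's proof is literally \lq\lq a more involved computation using Lemma \ref{URR-kS}\rq\rq). The reduction via Lemma \ref{aubry} to the primal identity $H_c=U_{k_0}H_sU_{k_0}^{-1}$ is clean, and your observation that $H_s$ satisfies the standing hypotheses (since $\hat s_k=\hat c_{k+k_0}\in\R$ and $\tilde s=\overline{s}$ on $\T$) is the right thing to check before invoking duality for $s$. Your fiberwise gauge-conjugacy route and the shift-expansion route via (\ref{URS})--(\ref{UkS}) are both legitimate; the former is arguably the cleaner organization.

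There is, however, a sign you have backwards in the fiberwise step. With the paper's definitions $\mu_n(x)=e^{2\pi i nk_0(n\alpha/2+x)}$ and $s(\theta)=c(\theta)e^{-2\pi i k_0(\theta+\alpha/2)}$, one computes $\mu_n\mu_{n+1}^{-1}=e^{-2\pi i k_0(x+n\alpha+\alpha/2)}$, which is the \emph{same} phase that converts $c(x+n\alpha)$ into $s(x+n\alpha)$. Hence conjugation by $U_{k_0}$ \emph{doubles} that phase on the superdiagonal of $H_s$ rather than cancelling it: the identity that actually holds is $U_{k_0}H_c(x)U_{k_0}^{-1}=H_s(x)$, equivalently $H_c=U_{k_0}^{-1}H_sU_{k_0}$, so the conjugating unitary in (\ref{conjHsHc}) should be $U_R^{-1}U_{k_0}^{-1}U_R=U_R^{-1}U_{-k_0}U_R$. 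This discrepancy is shared with the paper's own statement of the lemma (it is a $k_0\mapsto -k_0$ convention issue) and is harmless for everything downstream --- the computations in Lemmas \ref{measureID} and \ref{measureac} go through verbatim with $k_0$ replaced by $-k_0$, since (\ref{ortho2}) holds for all $p\neq 0$ --- but you should not assert that the phases \lq\lq precisely\rq\rq\ cancel in the direction you wrote; carrying out the short computation you allude to would have revealed the reversal.
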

\begin{proof}
A more involved computation using Lemma \ref{URR-kS}. $\hfill{}\Box$
\end{proof}

By Lemma \ref{complete}, for $x\in F^c$ with $|F|=0$, $\tilde{H}_s(x)$ has a complete set of normalized eigenfunctions with simple eigenvalues.
First, we are going, following \cite{gjls}, to prove there is a
covariant measurable enumeration of this set.

For any $x\in F^c$, let $u(x, \cdot)$ be one of its normalized
eigenfunctions. Define $j(u(x))$ be the leftmost maximum for $|u(x,
\cdot)|$. We fix $u(x, \cdot)$ by requiring $u(x, j)>0$ and say it is
attached to $j$. The key observation is that the argument of Section 2
of \cite{gjls}, while formulated there for discrete one dimensional
Schr\"odinger operators, works verbatim for any dicrete
one-dimensional operator with simple eigenvalues. \footnote{The
  existence of measurable enumeration of eigenfunctions was proved, in
  great generality in \cite{gk}. However, since we need a covariant
  representation satisfying (\ref{ortho2}) the argument of \cite{gjls}
  is better suited to our needs}. Thus we get for a.e. $x$ a complete set of eigenfunctions $\{v_{l, j}(x, \cdot)\}_{l, j}$ with eigenvalues $\{e_{l, j}(x)\}$ so that
\begin{enumerate}
\item for each fixed $l, j$, $v_{l, j}(x, \cdot)$ and $e_{l, j}(x)$ are measurable functions of $x$.
\item $\{v_{l, j}(x, \cdot)\}_{j}$ are attached to $j$.
\item $v_{l, j}(x, j)\geq v_{l+1, j}(x, j)$. If the equality holds then $e_{l, j}(x)>e_{l+1, j}(x)$. \footnote{For fixed $l, j$, generally, $v_{l, j}(x)$ may vanish identically on a positive measure set of $x$}
\end{enumerate}
\

By simplicity of the eigenvalues, for any $(l, j)\neq (l^\prime, j^\prime)$ we have
\begin{align}\label{ortho1}
\sum_{n\in \Z}\overline{v_{l, j}(x, n)}v_{l^\prime, j^\prime}(x, n)=0.
\end{align}
Since $\tilde{H}_s(x+p\alpha)=T^{-p}\tilde{H}_s(x)T^p$, where
$T\psi(n)=\psi(n-1),$ we have
\begin{align}\label{defvlj}
v_{l, j}(x+p\alpha, \cdot-p)=v_{l, j+p}(x, \cdot).
\end{align}
Therefore by (\ref{ortho1}) and (\ref{defvlj}), for any $l, l^\prime$, any $p\neq 0$,
\begin{align}\label{ortho2}
\sum_{n}\overline{v_{l, j}(x+p\alpha, n-p)}v_{l^\prime, j}(x, n)=0.
\end{align}

Fix any $l, j$ and $f_q(x)\in L^2(\T)$. Let $\psi_{x}^{q, l, j}(n)=(U_{R}^{-1}U_{k_0}U_Rf_qv_{l, j})(x, n)\in l^2(\Z)$.
Let $\mu_{x}^{q, l, j}$ be the spectral measure of $\tilde{H}_c(x)$ associated to $\psi_x^{q, l, j}(\cdot)$.

\begin{lemma}\label{measureID}
$d\mu_x^{q, l, j}$ is a.e.  independent of $x$.
\end{lemma}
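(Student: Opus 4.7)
The plan is to exploit the commutation relations from Lemma \ref{URR-kS} to set up a covariance for the fiber measures under the shift $x\mapsto x+\alpha$, and then conclude via ergodicity of the irrational rotation.

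First I would verify that $W = U_R^{-1}U_{k_0}U_R$ commutes with every $S_m$. By (\ref{URS}), $U_R S_m = M_{e^{2\pi i m x}} U_R$, and the multiplication operator $M_{e^{2\pi i m x}}$ depends only on $x$ and therefore commutes with the fiberwise operator $U_{k_0}$. Combining, $S_m W = W S_m$. Independently, the standard quasi-periodic covariance $T^m H_c(x+m\alpha)T^{-m} = H_c(x)$ shows that $S_m$ commutes with $H_c$ viewed as a direct integral on $\mathcal{H}$, so by Lemma \ref{aubry}, $S_m$ commutes with $\tilde{H}_c$ as well.

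Next I would pass to the fiber level. Writing $S_1\Psi^{q,l,j} = W(S_1(f_q v_{l,j}))$ and invoking (\ref{defvlj}) to compute $(S_1(f_q v_{l,j}))(x,n) = f_q(x+\alpha)\,v_{l,j+1}(x,n)$, the fiberwise identity $(S_1\Psi)(x,\cdot) = T\Psi(x+\alpha,\cdot)$ together with the covariance $T\tilde{H}_c(x+\alpha)T^{-1} = \tilde{H}_c(x)$ yields, by unitary invariance of spectral measures,
\begin{equation*}
\mu_{x+\alpha}^{q,l,j} \;=\; \mu_x^{q',\,l,\,j+1}, \qquad f_{q'}(\beta) := f_q(\beta+\alpha).
\end{equation*}
By linearity and density one can reduce to Fourier modes $f_q(\beta) = e^{2\pi i q\beta}$, where $f_{q'} = e^{2\pi i q\alpha} f_q$, so the induced phase cancels from the sesquilinear spectral form, and iteration gives $\mu_{x+m\alpha}^{q,l,j} = \mu_x^{q,l,j+m}$ for every $m\in\Z$.

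To extract honest $\alpha$-invariance (and not merely invariance up to a $j$-shift) I would perform a parallel direct computation using (\ref{UkS}): the quadratic phase of $U_{k_0}$ couples the shift $S_\ell$ with multiplication by $e^{2\pi i\ell k_0(x+\ell\alpha/2)}$, and a short Fourier-mode calculation yields $\psi_x^{q,l,j} = (\text{phase})\cdot T^{-qk_0}\psi^{0,l,j}_{x-qk_0\alpha}$, hence $\mu_x^{q,l,j} = \mu_{x-qk_0\alpha}^{0,l,j}$. Since $k_0\neq 0$, one can then trade the residual $j$-shift from the previous step against a $q$-shift to obtain $\mu_{x+\alpha}^{q,l,j} = \mu_x^{q,l,j}$ for a.e.\ $x$. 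Ergodicity of the rotation by $\alpha$ on $\T$ then forces $x\mapsto \mu_x^{q,l,j}$ to be a.e.\ constant.

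The main obstacle will be this final absorption step: one must use $k_0\neq 0$ in an essential way to convert the $j$-shift into a pure $x$-shift, using the quadratic-phase coupling in (\ref{UGk})--(\ref{UkS}). This reflects the central mechanism of the paper: a nonzero winding number is precisely what smears the $\delta$-like eigenvalue contributions visible on the $\tilde{H}_s$-side into a fiber-invariant family of measures on the $\tilde{H}_c$-side, setting the stage for the absolute continuity proved in the subsequent steps.
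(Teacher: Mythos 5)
Your two covariance relations are both correct and do follow from Lemma \ref{URR-kS}: writing $W=U_R^{-1}U_{k_0}U_R$, one checks exactly as you say that $WS_m=S_mW$ (both $U_{k_0}$ and $e^{2\pi imx}$ are multiplication operators on $L^2(\T\times\Z)$), which for Fourier modes $f_q$ gives $\mu^{q,l,j}_{x+\alpha}=\mu^{q,l,j+1}_x$ via (\ref{defvlj}); and a computation with (\ref{UkS}) does yield $\psi^{q,l,j}_x=(\mathrm{phase})\cdot T^{-qk_0}\psi^{0,l,j}_{x-qk_0\alpha}$, hence $\mu^{q,l,j}_x=\mu^{0,l,j}_{x-qk_0\alpha}$. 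The gap is the final ``trade the residual $j$-shift against a $q$-shift'' step: it cannot be carried out. Substituting the second relation into the first collapses the pair to the single statement $\mu^{0,l,j}_{y+\alpha}=\mu^{0,l,j+1}_y$, i.e.\ invariance only up to re-indexing in $j$; no relation remains that compares $\mu^{0,l,j}$ with itself at two different values of $x$. Concretely, the family $\nu^{q,l,j}_x:=\delta_{g(x-qk_0\alpha+j\alpha)}$, with $g$ any non-constant measurable function, satisfies \emph{both} of your relations for every $k_0$ and is not a.e.\ constant in $x$. So no formal manipulation of the two relations --- nor an appeal to ergodicity, which requires genuine a.e.\ invariance of the family at \emph{fixed} $(q,l,j)$ --- can close the argument. (Note also that both relations hold verbatim when $k_0=0$, where the conclusion is false since $\tilde{H}_c=\tilde{H}_s$ has pure point spectrum; your use of $k_0\neq0$ so far only relocates the base point $x$ and does not exploit the winding.)

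The missing ingredient is the orthogonality (\ref{ortho1})--(\ref{ortho2}) between eigenfunctions attached to different sites, and it can only be accessed through an \emph{off-diagonal} pairing, which your purely diagonal spectral-measure identities never see. The paper's proof computes, for $m\neq 0$, the $m$-th Fourier coefficient $\int_\T e^{2\pi imx}\int F(E)\,d\mu^{q,l,j}_x(E)\,dx=\langle W f_qv_{l,j},\,e^{2\pi imx}WF(\tilde{H}_s)f_qv_{l,j}\rangle_{\mathcal{H}}$ and uses precisely the commutation relations you derived --- but now applied to the extra factor $e^{2\pi imx}$ --- to convert that multiplication on the $\tilde{H}_c$ side into the lattice shift $S_{-mk_0}$ composed with a multiplication on the $\tilde{H}_s$ side. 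Since $F(\tilde{H}_s)f_qv_{l,j}=F(e_{l,j}(\cdot))f_qv_{l,j}$, the resulting integrand contains the cross term $\sum_n\overline{v_{l,j-mk_0}(x,n)}\,v_{l,j}(x,n)$, which vanishes by (\ref{ortho2}) exactly because $mk_0\neq0$. Hence every nonzero Fourier coefficient of $x\mapsto\int F\,d\mu^{q,l,j}_x$ vanishes and the function is a.e.\ constant; separability of $C(\R)$ finishes the proof. This off-diagonal vanishing is where $k_0\neq0$ enters essentially, and it is the step your argument is missing.
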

\begin{proof}
Take any continuous function $F$ and $m\neq 0$. By the definition of spectral measure we have, by (\ref{conjHsHc}),
\begin{align*}
  \mathcal{I} \triangleq &|\int_{\T}e^{2\pi i mx} \int F(E)\ d\mu^{q, l, j}_x(E)\ dx|\\
=&|\int_{\T} e^{2\pi i mx}\langle\psi_x^{q, l, j}, F(\tilde{H}_c(x))\psi_x^{q, l, j}\rangle_{l^2(\Z)}\ dx| \notag\\
=&|\langle U_{R}^{-1}U_{k_0}U_R f_q v_{l, j}, e^{2\pi i mx} U_{R}^{-1}U_{k_0}U_RF(\tilde{H}_s) f_q v_{l, j}\rangle_{\mathcal{H}}| \notag
\end{align*}
Applying (\ref{URS}), (\ref{UR-S}) and (\ref{UkS}) to this inner product we have 
\begin{align*}
\mathcal{I}=&|\langle U_{R}^{-1}U_{k_0}U_R f_q v_{l, j}, U_{R}^{-1}S_{-m}U_{k_0}U_RF(\tilde{H}_s) f_q v_{l, j}\rangle_{\mathcal{H}}| \notag\\
=&|\langle U_{k_0}U_R f_q v_{l, j}, e^{2\pi i mk_0(x-\frac{m\alpha}{2})}U_{k_0} S_{-m}U_RF(\tilde{H}_s) f_q v_{l, j}\rangle_{\mathcal{H}}| \notag\\
=&|\langle U_R f_q v_{l, j}, e^{2\pi i mk_0x}S_{-m}U_RF(\tilde{H}_s) f_q v_{l, j}\rangle_{\mathcal{H}}| \notag\\
=&|\langle U_R f_q v_{l, j}, S_{-m}e^{2\pi i mk_0x}U_RF(\tilde{H}_s) f_q v_{l, j}\rangle_{\mathcal{H}}| \notag\\
=&|\langle S_mU_R f_q v_{l, j}, e^{2\pi i mk_0x}U_RF(\tilde{H}_s) f_q v_{l, j}\rangle_{\mathcal{H}}| \notag\\
=&|\langle S_mU_Rf_q v_{l, j}, U_RS_{mk_0}F(\tilde{H}_s) f_q v_{l, j}\rangle_{\mathcal{H}}| \notag\\
=&|\langle U_{R}^{-1}S_mU_R f_q v_{l, j}, S_{mk_0}F(\tilde{H}_s) f_q v_{l, j}\rangle_{\mathcal{H}}| \notag\\
=&|\langle S_{-mk_0}e^{-2\pi i mx} f_q v_{l, j}, F(\tilde{H}_s) f_q v_{l, j}\rangle_{\mathcal{H}}|
\end{align*}
Thus, by (\ref{ortho2}),
\begin{align}
\mathcal{I}
=&|\int_{x\in \T}\sum_{n\in \Z} \overline{e^{-2\pi im(x-mk_0\alpha)}f_q(x-mk_0\alpha)v_{l, j} (x-mk_0\alpha, n+mk_0)} F(e_{l, j}(x)) f_q(x)v_{l, j}(x, n)\ dx| \label{specmeasinde}\\
=&|\int_{x\in \T}e^{2\pi i mx}\overline{f_q(x-mk_0\alpha)}f_q(x)F(e_{l, j}(x)) \sum_{n\in \Z}\overline{v_{l, j}(x-mk_0\alpha, n+mk_0)}v_{l, j}(x, n)\ dx| \notag\\
=&0,  \notag
\end{align}
This result implies $\int F(E)\ d\mu_x^{q, l, j}(E)$ is a.e.  independent of $x$ for all continuous functions $F$. 
Since the set of continuous function is separable, we conclude that $d\mu^{q, l, j}_x$ is a.e.  independent of $x$. $\hfill{} \Box$
\end{proof}

Lemma \ref{measureID} is similar to the analogous (but much simpler) statement in
\cite{gjls} for the Aubry duality transformation $U_R$. After that the
argument of \cite{gjls}  for absolute continuity of the dual measures  relies on the application of Deift-Simon
theorem \cite{ds} (the latter is still unproved for the zero $L(E)$ case leading to a gap in
\cite{gjls}, but correct in case of $L(E)>0.$) Here we however cannot
employ this line of reasoning since  Deift-Simon theorem requires a
second order operator  while our $\tilde{H}_s$ is generally
long-range. Thus we employ a different strategy to obtain absolute
continuity, which has an additional advantage of being somewhat universal.

\begin{lemma}\label{measureac}
For a.e. $x$, $d\mu_x^{q, l, j}$ is absolutely continuous.
\end{lemma}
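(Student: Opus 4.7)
The plan is to compute $d\mu_x^{q,l,j}$ explicitly by pushing through the unitary intertwiner from Lemma~\ref{AA}, reducing absolute continuity of this spectral measure for $\tilde{H}_c(x)$ to a statement about the pushforward of Lebesgue measure under the eigenvalue function $e_{l,j}$; this pushforward will then be controlled by absolute continuity of the density of states via Lemma~\ref{IDSac}.

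First, by Lemma~\ref{measureID} the measure $d\mu_x^{q,l,j}$ is a.e.\ independent of $x$, so it equals its average $d\mu^{q,l,j}:=\int_\T d\mu_x^{q,l,j}\,dx$ for a.e.\ $x$. Using Lemma~\ref{AA}, unitarity of $U_R^{-1}U_{k_0}U_R$, and the fact that $v_{l,j}(x,\cdot)$ is a normalized eigenfunction of $\tilde{H}_s(x)$ at $e_{l,j}(x)$ on its support $E_{l,j}=\{x:v_{l,j}(x,\cdot)\not\equiv 0\}$, I would obtain, for any Borel $U\subset\R$,
$$d\mu^{q,l,j}(U)=\langle f_q v_{l,j},\chi_U(\tilde{H}_s) f_q v_{l,j}\rangle_{\mathcal{H}}=\int_{E_{l,j}}|f_q(x)|^2\chi_U(e_{l,j}(x))\,dx,$$
which exhibits $d\mu^{q,l,j}$ as the pushforward of $|f_q|^2\chi_{E_{l,j}}\,dx$ by the measurable map $e_{l,j}:\T\to\R$.

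Next I would decompose the density of states of $\tilde{H}_s$ along the eigenbasis $\{v_{l,j}(x,\cdot)\}$. Since $\tilde{H}_s(x)$ is a convolution operator on $l^2(\Z)$, the diagonal matrix element $\langle\delta_k,\chi_U(\tilde{H}_s(x))\delta_k\rangle$ is independent of $k$, so for any fixed $k$
$$dN_{\tilde{H}_s}(U)=\int_\T\langle\delta_k,\chi_U(\tilde{H}_s(x))\delta_k\rangle\,dx=\sum_{l,j}\int_\T|v_{l,j}(x,k)|^2\chi_U(e_{l,j}(x))\,dx.$$
For any Lebesgue-null $N\subset\R$, Lemma~\ref{IDSac} gives $dN_{\tilde{H}_s}(N)=0$, which forces every nonnegative summand to vanish. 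Taking the union over $k\in\Z$ then yields that $\{x\in E_{l,j}: e_{l,j}(x)\in N\}$ has Lebesgue measure zero for every $l,j$. Plugging back into the formula for $d\mu^{q,l,j}$ produces $d\mu^{q,l,j}(N)=0$, hence absolute continuity.

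The main subtlety lies in the first step. Because $\psi_x^{q,l,j}$ depends on $x$ in a complicated way through $U_R^{-1}U_{k_0}U_R$, there is no useful pointwise-in-$n$ decay of $\psi_x^{q,l,j}$ with which one could bound the fibered spectral measure by the density of states directly. What rescues the argument is precisely the $x$-independence supplied by Lemma~\ref{measureID}: averaging over $x$ moves the computation into the Hilbert space $\mathcal{H}$, where the conjugation collapses cleanly onto the eigenprojector of $\tilde{H}_s$, so that only absolute continuity of the density of states is needed in the end.
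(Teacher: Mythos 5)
Your argument is correct and follows essentially the same route as the paper: average over $x$ using Lemma~\ref{measureID} together with the unitary conjugation of Lemma~\ref{AA} to identify $d\mu_x^{q,l,j}$ with the pushforward of $|f_q|^2\,dx$ under $e_{l,j}$, then kill Lebesgue-null sets by expanding the density of states of $\tilde H_s$ in the eigenbasis $\{v_{l,j}\}$ and invoking Lemma~\ref{IDSac}. One small correction: $\tilde H_s(x)$ is a quasiperiodic long-range operator, not a convolution operator, so $\langle\delta_k,\chi_U(\tilde H_s(x))\delta_k\rangle$ is not pointwise independent of $k$; the identity you actually need, $dN_{\tilde H_s}(U)=\int_\T\langle\delta_k,\chi_U(\tilde H_s(x))\delta_k\rangle\,dx$ for every $k$, follows instead from the covariance $\tilde H_s(x+p\alpha)=T^{-p}\tilde H_s(x)T^{p}$ after integrating in $x$, which is what the paper uses.
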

\begin{proof}
Note that by the definition of spectral measure, for any Borel set $\mathcal{A}$ we have
\begin{align}
  &\int_{\T} d\mu_x^{q, l, j}(\mathcal{A})\ dx \label{twospecmeas}\\
=&\int_{\T}\langle \psi_x^{q, l, j}, \chi_{\mathcal{A}}(\tilde{H}_c(x))\psi_x^{q, l, j}\rangle_{l^2(\Z)}\ dx \notag\\
=&\langle U_{R}^{-1}U_{k_0}U_R f_qv_{l, j}, \chi_{\mathcal{A}}(\tilde{H}_c)U_{R}^{-1}U_{k_0}U_Rf_q v_{l, j}\rangle_{\mathcal{H}}\notag\\
=&\langle U_{R}^{-1}U_{k_0}U_R f_qv_{l, j}, U_{R}^{-1}U_{k_0}U_R \chi_{\mathcal{A}}(\tilde{H}_s)f_qv_{l, j}\rangle_{\mathcal{H}}\notag\\
=&\langle f_qv_{l, j}, \chi_{\mathcal{A}}(\tilde{H}_s)f_qv_{l, j}\rangle_{\mathcal{H}} \notag\\
=&d\mu_{f_qv_{l, j}}(\mathcal{A}), \notag
\end{align}
where $d\mu_{f_qv_{l, j}}$ is the spectral measure of $\tilde{H}_s:
\mathcal{H}\rightarrow \mathcal{H}$ associated to the vector $f_q(x)
v_{l, j}(x, \cdot)\in \mathcal{H}$.
Since $d\mu_x^{q, l, j}$ is a.e.  independent of $x$, by (\ref{twospecmeas}) we get
\begin{align}\label{A}
d\mu_{x}^{q, l, j}(\mathcal{A})=d\mu_{f_qv_{l, j}}(\mathcal{A})\ \mathrm{for}\ \mathrm{a.e.}\ x.
\end{align}
Now it suffices to show that for zero Lebesgue measure set $\mathcal{A}$, $d\mu_{f_qv_{l, j}}(\mathcal{A})=0$. 
Note that again by the definition of spectral measure,
\begin{align}
  &d\mu_{f_qv_{l, j}}(\mathcal{A}) \notag\\
=&\int_{\T}\sum_{n\in \Z}\overline{f_qv_{l, j}(x, n)}(\chi_{\mathcal{A}}(\tilde{H}_s)f_qv_{l, j})(x, n)\ dx. \label{1}
\end{align}
For a.e. $x\in \T$, 
\begin{align}\label{BB}
(\tilde{H}_sv_{l, j})(x, n)=e_{l, j}(x)v_{l, j}(x, n),
\end{align}
thus $(\tilde{H}_sf_qv_{l, j})(x, n)=e_{l, j}(x)f_q(x)v_{l, j}(x, n)$. By (\ref{1}),
\begin{align}
  &d\mu_{f_qv_{l, j}}(\mathcal{A}) \notag\\
=&\int_{\T}\sum_{n\in \Z}\overline{f_q(x)v_{l, j}(x, n)}(\chi_{\mathcal{A}}(\tilde{H}_s)f_qv_{l, j})(x, n)\ dx \notag\\
=&\int_{\T}\sum_{n\in \Z}\overline{f_q(x)v_{l, j}(x, n)}\chi_{\mathcal{A}}(e_{l, j}(x))f_q(x)v_{l, j}(x, n)\ dx \notag\\
=&\int_{\T}\chi_{\mathcal{A}}(e_{l, j}(x))|f_q(x)|^2\ dx. \label{2}
\end{align}
It is thus enough to show for any $q, l, j\in \Z$ that (\ref{2})=0.
We can prove this using the absolutely continuity of the density of states measure.
Note that for any $k\in \Z$, 
\begin{align}\label{3}
dN_{\tilde{H}_s}(\mathcal{A})=\int_{\T}d\mu_{\delta_k, x}(\mathcal{A})\ dx,
\end{align}
where $d\mu_{\delta_k, x}$ is the spectral measure of $\tilde{H}_s(x)$ associated to the vector $\delta_k\in l^2(\Z)$.
Since for a.e. $x$, $v_{l, j}(x, \cdot)$ is an orthonormal basis of $l^2(\Z)$, we have that
\begin{align*}
\delta_k(\cdot)=\sum_{l, j}\langle \delta_k(\cdot), v_{l, j}(x, \cdot) \rangle v_{l, j}(x, \cdot)=\sum_{l, j}v_{l, j}(x, k)v_{l, j}(x, \cdot).
\end{align*}
By (\ref{ortho1}) and (\ref{BB}), this means for a.e. $x$,
\begin{align}\label{4}
d\mu_{\delta_k, x}(\mathcal{A})=\langle \delta_k, \chi_{\mathcal{A}}(\tilde{H}_s(x))\delta_k\rangle=\sum_{l, j}|v_{l, j}(x, k)|^2 \chi_{\mathcal{A}}(e_{l,j}(x)).
\end{align}
By Lemma \ref{IDSac}, $dN_{\tilde{H}_s}(\mathcal{A})=0$. Thus combining (\ref{3}) with (\ref{4}) we get
\begin{align*}
\sum_{l, j}\int \chi_{\mathcal{A}}(e_{l, j}(x))\ dx=\sum_{k\in \Z}\int_{\T}d\mu_{\delta_k, x}(\mathcal{A})\ dx=0.
\end{align*}
This implies in particular
\begin{align*}
\chi_{\mathcal{A}}(e_{l, j}(x))=0\ \mathrm{for}\ \mathrm{a.e.}\ x\ \mathrm{and}\ \mathrm{any}\ l,j.
\end{align*}
By (\ref{2}), $d\mu_{f_qv_{l, j}}(\mathcal{A})=0$.
By (\ref{A}), we conclude that for a.e. $x$, $\{d\mu_{x}^{q, l, j}\}_{q, l, j}$ are absolutely continuous. $\hfill{} \Box$
\end{proof}

Let $\{f_q(x)\}_{q}$ be an orthonormal basis for $L^2(\T)$. Note that the non-vanishing $\{v_{l, j}(x, \cdot)\}$ form an orthonormal basis for $\mathcal{H}$.
It follows that $\{f_q(x)v_{l, j}(x, \cdot)\}_{q, l, j}$ form a complete orthogonal set in $\mathcal{H}$ (but not necessarily orthonormal). 
Since $U_{R}^{-1}U_{k_0}U_R$ is unitary, it follows that $\{(U_{R}^{-1}U_{k_0}U_R f_q v_{l, j})(x, \cdot)\}_{q, l, j}$ is a complete orthogonal set in $\mathcal{H}$.
Thus for a.e. $x$, $\{\psi_{x}^{q, l, j}(\cdot)=(U_{R}^{-1}U_{k_0}U_R f_q v_{l, j})(x, \cdot)\}_{q, l, j}$ is a complete set in $l^2(\Z)$.
Since $\psi_x^{q, l, j}$ is a complete set, we get that $\tilde{H}_c(x)$ only has absolutely continuous spectrum for a.e.  $x$. $\hfill{} \Box$

\section{Absence of eigenvalues. Proof of Theorem \ref{sing} and the second part of Theorem \ref{transition}}\label{singsection}

\subsection{Preparation for the proof of Theorem \ref{sing}}
Consider a general Jacobi operator $(H_{c}(\theta)u)_n=c(\theta+n\alpha)u_{n+1}+\tilde{c}(\theta+(n-1)\alpha)u_{n-1}+v(\theta+n\alpha)u_n$, where $v(\theta)$ is Lipshitz and $c(\cdot)\in \C^\omega(\T)$ is allowed to have zeros on $\T$. 
Let $c(\theta)=f(\theta)g(\theta)$, where $f(\theta)=\prod_{j=1}^m (e^{2\pi i \theta}-e^{2\pi i \theta_j})$ with $\{\theta_j\}_{j=1}^m$ being zeros of $c(\theta)$ counting multiplicities, and $g(\theta)\neq 0$ on $\T$.

Let us define 
\begin{align}\label{defdeltac}
\delta_c(\alpha,\theta)=\limsup_{n\rightarrow\infty}\frac{\sum_{j=1}^m \ln{\|q_n(\theta-\theta_j)\|}+\ln{q_{n+1}}}{q_n}.
\end{align} 
Note that for a.e.  $\theta$, $\delta_c(\alpha,\theta)=\beta(\alpha)$.

We will assume $\theta$ does not belong to the following countable set (otherwise the operator is not well defined).
\begin{align}\label{thetaset}
\theta\notin \Theta\triangleq \cup_{j=1}^m  \theta_j+\Z\alpha+\Z
\end{align}

Fix any $\theta\notin \Theta$ and energy $E$ satisfying $L(E)<\delta_c(\alpha,\theta)$.
Recall that a formal solution to $H_{c}(\theta)u=Eu$ can be reconstructed via the following equation:
\begin{align*}
\left(
\begin{matrix}
u_{n+1}\\
u_n
\end{matrix}
\right)=
A_{c, E}(\theta+n\alpha)
\left(
\begin{matrix}
u_n\\
u_{n-1}
\end{matrix}
\right),
\end{align*}
where 
$A_{c, E}(\theta)=
\left(
\begin{matrix}
\frac{E-v(\theta)}{c(\theta)}\ \ &-\frac{\tilde{c}(\theta-\alpha)}{c(\theta)}\\
1\ \ &0
\end{matrix}
\right)$. 
We separate the singular and regular parts of $A_{c, E}$ and rewrite it in the following way:
\begin{align}\label{defDcE}
A_{c, E}(\theta)=
\frac{1}{f(\theta)}
\left(
\begin{matrix}
\frac{E-v(\theta)}{g(\theta)}\ \ &-\frac{\tilde{c}(\theta-\alpha)}{g(\theta)}\\
f(\theta)\ \ &0
\end{matrix}
\right)
\triangleq
\frac{1}{f(\theta)}D_{c, E}(\theta).
\end{align}
From now on we will omit the dependence of these matrices on $c, E$ and denote $A(\theta)=A_{c, E}(\theta)$ and $D(\theta)=D_{c, E}(\theta)$. Let $A^k=A(\theta+k\alpha)$, $D^k=D(\theta+k\alpha)$. For any function $z(\theta)$ on $\T$ let $z_k=z(\theta+k\alpha)$, for simplicity.
Note that clearly we have $\int_{\T}\ln{|f(\theta)|} d\theta=0$, hence $L(E)=L(\alpha, A)=L(\alpha, D)$.

The first step is standard in Gordon-type methods. For $A\in GL(2, \C)$ we have the following Caley-Hamilton equations:
\begin{align}
A^2-\mathrm{Tr}A\cdot A +\det{A}\cdot \mathrm{Id}=0, \label{B1}\\
A-\mathrm{Tr}A\cdot \mathrm{Id}+\det{A}\cdot A^{-1}=0.  \label{B2}
\end{align}

Fix any $0<\epsilon<(\delta_c(\alpha, \theta)-L(E))/4$. By the definition of $\delta_c(\alpha, \theta)$, there exists a subsequence $\{q_{n_l}\}$ of $\{q_n\}$ such that
\begin{align}\label{C}
\prod_{j=1}^m \|q_{n_l} (\theta-\theta_j)\|\geq \frac{e^{(\delta_c-\epsilon)q_{n_l}}}{q_{n_l+1}}.
\end{align}
We will use the following estimate.
\begin{lemma}\cite{jy} \label{jyest}
\begin{align*}
\prod_{j=0}^{q_{n_l}-1} |f(\theta+j\alpha)| \geq \frac{e^{(\delta_c-\epsilon)q_{n_l}}}{q_{n_l+1}}.
\end{align*}
\end{lemma}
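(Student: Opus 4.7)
The plan is to factor $f$ into its $m$ linear factors, apply a classical sine-product estimate to each factor separately, and then combine the result with the hypothesis (\ref{C}).

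Using $|e^{2\pi i\phi}-e^{2\pi i\theta_j}|=2|\sin\pi(\phi-\theta_j)|$, I first decompose
$$\prod_{j=0}^{q_{n_l}-1}|f(\theta+j\alpha)| = \prod_{i=1}^m \prod_{j=0}^{q_{n_l}-1} 2\left|\sin\pi\left((\theta-\theta_i)+j\alpha\right)\right|,$$
reducing the problem to estimating each of the $m$ inner products. The main input is a classical sine-product lemma (standard in the almost Mathieu literature; see e.g.~\cite{ayz,jy}): for every $\epsilon'>0$ there exists $N_{\epsilon'}$ such that for all $n\ge N_{\epsilon'}$ and every $\phi\notin\Z\alpha+\Z$,
$$\prod_{j=0}^{q_n-1} 2|\sin\pi(\phi+j\alpha)|\ge e^{-\epsilon' q_n}\|q_n\phi\|.$$
The intuition is that $\{j\alpha\bmod 1\}_{j=0}^{q_n-1}$ is nearly equispaced at distance $1/q_n$ by the three-distance theorem, so the left-hand side is a small perturbation of the closed-form product $\prod_{k=0}^{q_n-1}2|\sin\pi(\phi+k/q_n)|=|2\sin\pi q_n\phi|\ge 4\|q_n\phi\|$; the discrepancy between the true orbit and the $q_n$-th roots of unity is absorbed in $e^{\epsilon' q_n}$ for $n$ large.

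Because $\theta\notin\Theta$, each $\phi=\theta-\theta_i$ lies off $\Z\alpha+\Z$, so the estimate applies simultaneously for $i=1,\dots,m$ once $n_l$ is large enough (the multiplicity $m$ is finite). Multiplying the $m$ bounds gives
$$\prod_{j=0}^{q_{n_l}-1}|f(\theta+j\alpha)|\ge e^{-m\epsilon' q_{n_l}}\prod_{i=1}^m\|q_{n_l}(\theta-\theta_i)\|.$$
Choosing $\epsilon'=\epsilon/(2m)$ and invoking (\ref{C}) with slack $\epsilon/2$ in place of $\epsilon$ (permissible by the $\limsup$ definition of $\delta_c$, passing if necessary to a sub-subsequence),
$$\prod_{j=0}^{q_{n_l}-1}|f(\theta+j\alpha)|\ge e^{-\epsilon q_{n_l}/2}\cdot\frac{e^{(\delta_c-\epsilon/2)q_{n_l}}}{q_{n_l+1}}=\frac{e^{(\delta_c-\epsilon)q_{n_l}}}{q_{n_l+1}},$$
as required. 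The only genuine obstacle is the sine-product estimate, which is classical but requires a careful three-distance comparison between $\{j\alpha\}_{j=0}^{q_n-1}$ and $\{k/q_n\}_{k=0}^{q_n-1}$; everything else is bookkeeping around the finite multiplicity $m$.
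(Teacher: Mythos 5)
The paper gives no proof of this lemma (it is imported from \cite{jy}), so I can only judge your argument on its own terms; unfortunately the ``classical sine-product lemma'' on which everything rests is false as you state it. Take $\phi=-\alpha+\eta$ with $\eta>0$ tiny. The $j=1$ factor is $2|\sin\pi\eta|\approx 2\pi\eta$, while the product of the remaining $q_n-1$ factors is at most $q_n^{C}$ for an absolute constant $C$ (this is the correct classical estimate, recalled below), so the left-hand side is $O(\eta\,q_n^{C})$; on the other hand $\|q_n\phi\|_{\T}\geq\|q_n\alpha\|_{\T}-q_n\eta\geq\frac{1}{2q_{n+1}}$ once $\eta<\frac{1}{2q_nq_{n+1}}$. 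Letting $\eta\to0$ with $n$ fixed violates your inequality for every $\epsilon'$. The heuristic comparison with $\prod_k 2|\sin\pi(\phi+k/q_n)|=2|\sin\pi q_n\phi|$ breaks down at exactly one point: for the orbit point closest to $\Z$ the multiplicative discrepancy is of order $\|\phi+j_0\alpha\|_{\T}$ divided by the distance of the matched $q_n$-th root of unity to $\Z$, and this ratio is not absorbed by $e^{\epsilon'q_n}$. The true classical statement (Avila--Jitomirskaya type) is: with $j_0$ the minimizing index, $\prod_{j\neq j_0}2|\sin\pi(\phi+j\alpha)|\in[q_n^{-C},q_n^{C}]$, so the full product is comparable to $q_n^{\pm C}\min_{0\leq j<q_n}\|\phi+j\alpha\|_{\T}$ --- a quantity that can be arbitrarily smaller than $\|q_n\phi\|_{\T}$.

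The missing idea is the passage from $\min_{0\leq j<q_{n_l}}\|\theta-\theta_i+j\alpha\|_{\T}$ to $\|q_{n_l}(\theta-\theta_i)\|_{\T}$, and it genuinely requires (\ref{C}) \emph{before} the sine-product step, not only at the end. From
\[
\|q_n\phi\|_{\T}\;\leq\; q_n\|\phi+j_0\alpha\|_{\T}+j_0\|q_n\alpha\|_{\T}\;\leq\; q_n\|\phi+j_0\alpha\|_{\T}+\tfrac{2q_n}{q_{n+1}}
\]
one gets $\min_{j}\|\phi+j\alpha\|_{\T}\geq\frac{\|q_n\phi\|_{\T}}{2q_n}$ \emph{provided} $\|q_n\phi\|_{\T}\geq\frac{4q_n}{q_{n+1}}$. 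Along the subsequence of (\ref{C}) each individual factor obeys $\|q_{n_l}(\theta-\theta_i)\|_{\T}\geq 2^{m-1}e^{(\delta_c-\epsilon)q_{n_l}}/q_{n_l+1}$ (the other $m-1$ factors being $\leq 1/2$), and since in the relevant range $\delta_c>4\epsilon>0$ (otherwise $\{E:L(E)<\delta_c\}$ is empty), the proviso holds for $l$ large; the resulting loss is only $q_{n_l}^{-C-1}$ per zero, harmless against $e^{-\epsilon q_{n_l}/2}$. With the correct form of the sine-product estimate plus this comparison step your decomposition and bookkeeping do yield the lemma; as written, the inequality you rely on simply is not true.
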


\subsection{Proof of Theorem \ref{sing}}
Assume $u$ is a bounded solution to $H_{c}(\theta)u=Eu$. We could scale $u$ so that 
$\|\left(
\begin{matrix}
u_0\\
u_{-1}
\end{matrix}
\right)\|=1$. We will prove 

\begin{lemma}\label{3vector1large}
For $q_{n_l}$ large enough
\begin{align*}
\max{\left(
\|\left(
\begin{matrix}
u_{q_{n_l}}\\
u_{q_{n_l}-1}
\end{matrix}
\right)\|, 
\|\left(
\begin{matrix}
u_{2q_{n_l}}\\
u_{2q_{n_l}-1}
\end{matrix}
\right)\|, 
\|\left(
\begin{matrix}
u_{-q_{n_l}}\\
u_{-q_{n_l}-1}
\end{matrix}
\right)\|\right) }\geq \frac{1}{4},
\end{align*}
\end{lemma}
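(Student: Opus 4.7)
The plan is to prove this Gordon-type lemma by contradiction: assume that all three vectors $w_{q_{n_l}} := (u_{q_{n_l}}, u_{q_{n_l}-1})^T$, $w_{2q_{n_l}}$, and $w_{-q_{n_l}}$ have norm strictly less than $1/4$, and deduce $\|w_0\| < 1$, contradicting the normalization $\|w_0\| = 1$ where $w_0 = (u_0, u_{-1})^T$. Write $M = A_{c,E,q_{n_l}}(\theta) = D_{q_{n_l}}(\theta)/F_{q_{n_l}}(\theta)$ and $M^\pm = A_{c,E,q_{n_l}}(\theta \pm q_{n_l}\alpha)$, so that by the cocycle identity $w_{q_{n_l}} = M w_0$, $\ w_{2q_{n_l}} = M^+ M w_0$, and $w_0 = M^- w_{-q_{n_l}}$.

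The first step is to establish a quantitative approximate-periodicity estimate $\|M^\pm - M\| \to 0$. I would factor $M = D/F$ and bound the shifts $D_{q_{n_l}}(\theta \pm q_{n_l}\alpha) - D_{q_{n_l}}(\theta)$ and $F_{q_{n_l}}(\theta \pm q_{n_l}\alpha) - F_{q_{n_l}}(\theta)$ by telescoping, using Lipschitz continuity of the entries of $D$ and of $f$, the small-denominator bound $\|q_{n_l}\alpha\|_\T \leq 1/q_{n_l+1}$ from (\ref{qnqn+1}), and the Lyapunov upper bound $\|D_{q_{n_l}}(\cdot)\| \leq C_\epsilon e^{(L(E)+\epsilon)q_{n_l}}$ from Lemma \ref{uniformupp} (noting $L(\alpha, A) = L(\alpha, D) = L(E)$ since $\int_\T \log|f|\, d\theta = 0$). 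Combined with the crucial subsequence lower bound $|F_{q_{n_l}}(\theta)| \geq e^{(\delta_c - \epsilon)q_{n_l}}/q_{n_l+1}$ supplied by Lemma \ref{jyest}, these give errors of order $q_{n_l+1}^2 \cdot e^{(L(E) - \delta_c + O(\epsilon))q_{n_l}}$, which vanish exponentially in $l$ thanks to the choice $4\epsilon < \delta_c - L(E)$.

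The second step is to apply the Cayley--Hamilton theorem. With $t = \mathrm{Tr}\, M$ and $d = \det M$, the identities $M^2 = tM - dI$ and $M^{-1} = (tI - M)/d$ combine with Step~1 and $\|w_0\| = 1$ to give the two approximate linear relations
\begin{align*}
d\, w_0 - t\, w_{q_{n_l}} + w_{2q_{n_l}} &= \mathcal{E}_1, \\
d\, w_{-q_{n_l}} + w_{q_{n_l}} - t\, w_0 &= \mathcal{E}_2,
\end{align*}
with $\|\mathcal{E}_i\| \to 0$ as $l \to \infty$.

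The final step is a case analysis on $(|t|, |d|)$. In the generic regime $\max(|t|, |d|) \geq 1$, one of the two relations directly yields $\|w_0\| \leq 3 \max(\|w_{q_{n_l}}\|, \|w_{2q_{n_l}}\|, \|w_{-q_{n_l}}\|) + o(1) < 3/4 + o(1)$. The main obstacle is the degenerate regime where $|t|$ and $|d|$ are both small, in which case $M$ can be nearly nilpotent with large operator norm and the naive triangle inequality from either relation alone is too weak. The resolution is to eliminate $w_{q_{n_l}}$ between the two relations, yielding $(t^2 - d)\, w_0 = t\, w_{2q_{n_l}} + td\, w_{-q_{n_l}} + O(\|\mathcal{E}\|)$, and to exploit the exact cocycle identity $1 = \|w_0\| \leq \|M\| \cdot \|w_{-q_{n_l}}\| + o(1)$ (forcing $\|M\| \geq 4 - o(1)$, so that $M$ has a large singular-value gap) together with the singular-value decomposition of $M$ to squeeze $\|w_0\|$ strictly below $1$. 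The exponential smallness of $\mathcal{E}_i$ from Step~1, which is the ultimate payoff of Lemma \ref{jyest} and the hypothesis $L(E) < \delta_c(\alpha, \theta)$, is what closes the argument.
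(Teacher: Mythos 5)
Your overall strategy --- a Gordon-type argument combining Cayley--Hamilton with an almost-periodicity estimate powered by Lemma \ref{jyest} --- is the right one, but Step~1 as you set it up does not close. You propose to control the operator norm $\|M^{\pm}-M\|$ and you record the resulting error as $q_{n_l+1}^{2}\,e^{(L(E)-\delta_c+O(\epsilon))q_{n_l}}$; this quantity does \emph{not} vanish. Indeed, (\ref{C}) forces $\ln q_{n_l+1}\ge(\delta_c-\epsilon)q_{n_l}$ (the product of distances there is at most $1$), so the exponent of your error is at least $(L+\delta_c-O(\epsilon))q_{n_l}\to+\infty$ since $\delta_c>L(E)\ge 0$. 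The problem is structural: writing $M^{\pm}-M=(D^{\pm}-D)/F^{\pm}+D\,(F-F^{\pm})/(F^{\pm}F)$, the second term carries \emph{two} factors of $1/|F|$, each costing a $q_{n_l+1}$ via Lemma \ref{jyest}, against only one factor of $1/q_{n_l+1}$ coming from $\|q_{n_l}\alpha\|_{\T}\le 1/q_{n_l+1}$; and partial products of $f$ admit no useful lower bound, so this cannot be repaired in operator norm. The paper instead telescopes the difference $A_{q_{n_l}}^{2}(\theta)-A_{2q_{n_l}}(\theta)$ \emph{applied to the solution}, so that the right-hand partial product collapses to the bounded vector $(u_{q_{n_l}+i},u_{q_{n_l}+i-1})^{T}$ and the full-product lower bound is invoked exactly once (see (\ref{estimate1})--(\ref{estimate2})); the single $q_{n_l+1}$ then cancels and the error is $e^{(L-\delta_c+4\epsilon)q_{n_l}}\to 0$. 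You need to restructure Step~1 in this vector-attached form (and likewise for the backward direction, comparing $A_{-q_{n_l}}(\theta)w_0$ with $A_{q_{n_l}}^{-1}(\theta)w_0$ rather than estimating $\|(M^{-})^{-1}-M^{-1}\|$).

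Your Step~3 is also overcomplicated and, as written, incomplete, because you treat $d=\det M$ as a free parameter. In fact $|\det A_{q_{n_l}}(\theta)|=|c(\theta-\alpha)/c(\theta+(q_{n_l}-1)\alpha)|\to 1$, so the ``degenerate regime where $|t|$ and $|d|$ are both small'' never occurs, and the singular-value-decomposition detour is both unnecessary and not actually carried out. Conversely, in your ``generic regime'' the claim that one relation alone gives $\|w_0\|\le 3\max(\cdots)+o(1)$ fails when $|t|$ is large: one must first use the second relation together with $\|w_0\|=1$ to force $|t|\le|d|\,\|w_{-q_{n_l}}\|+\|w_{q_{n_l}}\|+o(1)<5/8+o(1)$, and only then feed this into the first relation (where $|d|\to1$ gives $\|w_0\|<1/2+o(1)$, the desired contradiction). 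With the fact $|\det|\to1$ in hand your two approximate relations do close cleanly --- this is precisely the paper's dichotomy on whether $|\mathrm{Tr}A_{q_{n_l}}(\theta)|$ is $\le 1/2$ or $>1/2$ --- but that fact is the missing ingredient in your case analysis.
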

\subsubsection*{Proof of Lemma \ref{3vector1large}}
If $
\|\left(
\begin{matrix}
u_{q_{n_l}}\\
u_{q_{n_l}-1}
\end{matrix}
\right)\|=\|A_{q_{n_l}}(\theta)\left(
\begin{matrix}
u_0\\
u_{-1}
\end{matrix}
\right)\|< \frac{1}{4}
$, we divide the discussion into 2 cases:

{\it Case 1}: $|\mathrm{Tr}A_{q_{n_l}}(\theta)|\leq \frac{1}{2}$.

Note that since $|\det{A_{q_{n_l}}(\theta)}|=|\frac{c(\theta-\alpha)}{c(\theta+(q_{n_l}-1)\alpha)}|\rightarrow 1$, (\ref{B1}) implies $\|A_{q_{n_l}}^2(\theta)\left(
\begin{matrix}
u_0\\
u_{-1}
\end{matrix}
\right)\|\geq \frac{7}{8}$ for $q_{n_l}$ large enough.
By telescoping, 
\begin{align}
  &(A_{q_{n_l}}^2(\theta)-A_{2q_{n_l}}(\theta))\left(
\begin{matrix}
u_0\\
u_{-1}
\end{matrix}
\right) \notag\\
=&(\prod_{k=0}^{q_{n_l}-1}A^k-\prod_{k=q_{n_l}}^{2q_{n_l}-1}A^k) A_{q_{n_l}}(\theta)\left(
\begin{matrix}
u_0\\
u_{-1}
\end{matrix}
\right) \notag\\
=&\sum_{i=0}^{q_{n_l}-1} \left(\prod_{k=i+1}^{q_{n_l}-1}A^k \right)\cdot  \left(A^{i}-A^{q_{n_l}+i} \right) \cdot  \left(\prod_{k=q_{n_l}}^{q_{n_l}+i-1} A^k \right) \left(
\begin{matrix}
u_{q_{n_l}}\\
u_{q_{n_l}-1}
\end{matrix}
\right) \notag\\
=&\sum_{i=0}^{q_{n_l}-1} \left(\prod_{k=i+1}^{q_{n_l}-1}A^k \right)\cdot  \left(A^{i}-A^{q_{n_l}+i} \right) \cdot 
\left(
\begin{matrix}
u_{q_{n_l}+i}\\
u_{q_{n_l}+i-1}
\end{matrix}
\right) \notag\\
=&\sum_{i=0}^{q_{n_l}-1} \left(\prod_{k=i+1}^{q_{n_l}-1}\frac{D^k}{f_k} \right)\cdot  
\left( \frac{D_i-D_{q_{n_l}+i}}{f_i} \cdot 
\left(
\begin{matrix}
u_{q_{n_l}+i}\\
u_{q_{n_l}+i-1}
\end{matrix}
\right)
+
\frac{f_{q_{n_l}+i}-f_i}{f_i} \cdot 
\left(
\begin{matrix}
u_{q_{n_l}+i+1}\\
u_{q_{n_l}+i}
\end{matrix}
\right)\right). \label{estimate1}
\end{align}
Note that by our assumption $u$ is a bounded solution, so there exists a constant $C_1>0$ so that
\begin{align}\label{ubdd}
\|\left(
\begin{matrix}
u_{t}\\
u_{t-1}
\end{matrix}
\right)\|\leq C_1\ \mathrm{for}\ \mathrm{any}\ t\in \Z.
\end{align}
Clearly 
\begin{align}\label{fLip}
| f_{q_{n_l}+i}-f_i |\leq \frac{C_2}{q_{n_l+1}}\ \ \mathrm{for}\ \ \mathrm{some}\ \ \mathrm{constant}\ \ C_2,
 \end{align}
and since $v$ is Lipshitz we have
\begin{align}\label{Lip}
\|D_{i}-D_{q_{n_l}+i}\|\leq \frac{C_3}{q_{n_l+1}} \mathrm{for}\ \ \mathrm{some}\ \ \mathrm{constant}\ \ C_3.
\end{align}
Also by Lemmas \ref{jyest}, \ref{uniformupp} and Remark \ref{intcontrolprod}, we have
\begin{align}\label{estimate2}
\frac{\|\prod_{k=i+1}^{q_{n_l}-1}D^k\|}{|\prod_{k=i}^{q_{n_l}-1}f_k|}=\frac{\|\prod_{k=i+1}^{q_{n_l}-1}D^k\|\cdot |\prod_{k=0}^{i-1}f_k|}{|\prod_{k=0}^{q_{n_l}-1}f_k|}\leq q_{n_l+1}e^{(L(E)-\delta_c+3\epsilon)q_{n_l}}.
\end{align}
Now we combine (\ref{estimate1}), (\ref{ubdd}), (\ref{Lip}), (\ref{fLip}) with (\ref{estimate2}),
\begin{align*}
\|(A_{q_{n_l}}^2(\theta)-A_{2q_{n_l}}(\theta))\left(
\begin{matrix}
u_0\\
u_{-1}
\end{matrix}
\right)\|<e^{(L(E)-\delta_c+4\epsilon)q_{n_l}}\rightarrow 0.
\end{align*}
Hence
$\|A_{2q_{n_l}-1}(\theta)\left(
\begin{matrix}
u_0\\
u_{-1}
\end{matrix}
\right)\|\sim 
\|A_{q_{n_l}}^2(\theta)\left(
\begin{matrix}
u_0\\
u_{-1}
\end{matrix}
\right)\|\geq \frac{7}{8}$.

{\it Case 2}: $|\mathrm{Tr}A_{q_{n_l}}(\theta)|> \frac{1}{2}$.

Then $|\det{A_{q_{n_l}}(\theta)}|=|\frac{c(\theta-\alpha)}{c(\theta+(q_{n_l}-1)\alpha)}|\rightarrow 1$ and (\ref{B2}) imply
$\|A_{q_{n_l}}^{-1}\left(
\begin{matrix}
u_0\\
u_{-1}
\end{matrix}
\right)\|\geq \frac{1}{4}$ for $q_{n_l}$ large enough.
Similar to {\it Case 1}, we can prove $\|A_{-q_{n_l}}(\theta)\left(
\begin{matrix}
u_0\\
u_{-1}
\end{matrix}
\right)\|\sim \|A_{q_{n_l}}^{-1}(\theta)\left(
\begin{matrix}
u_0\\
u_{-1}
\end{matrix}
\right)\|\geq \frac{1}{4}$.
$\hfill{} \Box$

By Lemma \ref{3vector1large}, $H_{c}(\theta)$ has no decaying solutions on $\{E: L(E)<\delta_c(\alpha, \theta)\}$, therefore no eigenvalues.
$\hfill{} \Box$

\subsection{Proof of the second part of Theorem \ref{transition}}
Now let's come back to the extended Harper's model, where $c(\theta)=\lambda_1 e^{-2\pi i (\theta+\frac{\alpha}{2})}+\lambda_2+\lambda_3 e^{2\pi i (\theta+\frac{\alpha}{2})}$. In this case, $c(\theta)$ could take zero value when the parameters $\lambda$ satisfy some certain conditions. In fact,
\begin{itemize}
\item when $\lambda_1=\lambda_3\geq \frac{\lambda_2}{2}$, singular points are $\theta_1=\frac{1}{2\pi}\arccos{(-\frac{\lambda_2}{2\lambda_1})}-\frac{\alpha}{2}\ \mathrm{and}\ \theta_2=-\frac{1}{2\pi}\arccos{(-\frac{\lambda_2}{2\lambda_1})}-\frac{\alpha}{2}$ (notice that when $\lambda_1=\frac{\lambda_2}{2}$ there is a single singular point $\theta_1=\theta_2=\frac{1}{2}-\frac{\alpha}{2}$).
\item when $\lambda_1\neq \lambda_3$ and $\lambda_1+\lambda_3=\lambda_2$, the singular point is $\theta_1=\frac{1}{2}-\frac{\alpha}{2}$.
\end{itemize}
Thus for the extended Harper's model, the proper definition of $\delta_c(\alpha, \theta)$ depends on the parameters:
\begin{itemize}
\item {\it Case 1}: (non-singular case): When (1) $\lambda_1\neq \lambda_3$ and $\lambda_1+\lambda_3\neq \lambda_2$ or (2) $\lambda_1=\lambda_3<\frac{\lambda_2}{2}$, we have $\delta_c(\alpha, \theta)=\beta(\alpha)$ for all $\theta$.
\item {\it Case 2}: When $\lambda_1+\lambda_3=\lambda_2$, let $\theta_1\triangleq \frac{1}{2}-\frac{\alpha}{2}$, then $\delta_c(\alpha,\theta)= \limsup_{n\rightarrow\infty}\frac{\ln{\|q_n(\theta-\theta_1)\|}+\ln{q_{n+1}}}{q_n}=\beta(\alpha)$ for a.e. $\theta$.
\item {\it Case 3}: When $\lambda_1=\lambda_3>\frac{\lambda_2}{2}$, let $\theta_1=\frac{1}{2\pi}\arccos{(-\frac{\lambda_2}{2\lambda_1})}-\frac{\alpha}{2}\ \mathrm{and}\ \theta_2=-\frac{1}{2\pi}\arccos{(-\frac{\lambda_2}{2\lambda_1})}-\frac{\alpha}{2}$,  then $\delta(\alpha,\theta)= \limsup_{n\rightarrow\infty}\frac{\sum_{j=1,2}\ln{\|q_n(\theta-\theta_j)\|}+\ln{q_{n+1}}}{q_n}=\beta(\alpha)$ for a.e. $\theta$.
\end{itemize}
Note that for each of the three cases, $L(\lambda)<\beta(\alpha)$
implies absence of eigenvalues for either all or an arithmetic
explicit full measure set of $\theta$. Thus the purely singular continuous part simply comes from the fact that $L(\lambda)>0$.
$\hfill{} \Box$

\section{Pure point spectrum. Proof of the first part of Theorem \ref{transition}}\label{ppsection}
\subsection{Preparation}
Note that if $\lambda=(\lambda_1, \lambda_2, \lambda_3)$ is in region I, its dual $\hat{\lambda}=(\frac{\lambda_3}{\lambda_2}, \frac{1}{\lambda_2}, \frac{\lambda_1}{\lambda_2})$ belongs to region II. By Theorem \ref{LE}, 
for $\lambda$ in region I and any $E\in \Sigma_{\lambda, \alpha}$, $L(E)\equiv L(\lambda)>0$; also for any $E\in \Sigma_{\hat{\lambda}, \alpha}$, $(\alpha, \tilde{A}_{|d|, E})$ is subcritical on $|\mathrm{Im}\theta|\leq \frac{L(\lambda)}{2\pi}$.
It is straightforward that we have $w(d)=0$, since $d(\theta)$ is explicitly given by
\begin{align*}
d(\theta)=\frac{\lambda_1}{\lambda_2}e^{-2\pi i (\theta+\frac{\alpha}{2})}\left(e^{2\pi i (\theta+\frac{\alpha}{2})}-\frac{-1+\sqrt{1-4\lambda_1\lambda_3}}{2\lambda_1}\right)\left(e^{2\pi i (\theta+\frac{\alpha}{2})}-\frac{-1-\sqrt{1-4\lambda_1\lambda_3}}{2\lambda_1}\right).
\end{align*}
The following theorems provide full measure reducibility of $(\alpha, \tilde{A}_{|d|, E})$.
\begin{thm}\label{arc}\cite{arc}
For $\alpha\in \R\backslash \Q$ such that $\beta(\alpha)>0$, if a cocycle $(\alpha, A)$ is subcritical on $|\mathrm{Im}\theta|\leq \frac{h}{2\pi}$, then for every $0<h^\prime<h$, there exists $C>0$ such that if $\delta>0$ is sufficiently small, then there exist a subsequence $\{\frac{p_{n_k}}{q_{n_k}}\}$ of the continued fraction approximants of $\alpha$, sequences of matrices $B_{n_k}\in C^{\omega}_{\frac{h^\prime}{2\pi}}(\T, PSL(2,\R))$ and $R_{n_k}\in SO(2,\R)$ such that $\|B_{n_k}\|_{\frac{h^\prime}{2\pi}}\leq e^{C\delta q_{n_k}}$ and $\|B_{n_k}(\theta+\alpha)A(\theta)B_{n_k}^{-1}(\theta)-R_{n_k}\|_{\frac{h^\prime}{2\pi}}\leq e^{-\delta q_{n_k}}$.
\end{thm}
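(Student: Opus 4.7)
The statement is the quantitative almost-reducibility theorem of Avila in the Liouvillean regime, so my plan is to follow the scheme of \cite{arc}, whose two essential ingredients are subcriticality, supplying a priori regularity, and the continued-fraction denominators, playing the role of renormalization scales.

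First I would exploit the subcriticality hypothesis on $|\mathrm{Im}\,\theta|\le h/(2\pi)$, which by Avila's global theory of one-frequency analytic $SL(2,\R)$-cocycles means $L(\alpha, A(\cdot + i\epsilon))\equiv 0$ on an open neighborhood of that strip. This openness furnishes uniform quantitative control near the real axis; in particular, for any $0<h'<h$ one obtains a priori bounds on the growth of transfer matrices and on the relative decay of subordinate solutions, which will substitute for any Diophantine assumption on $\alpha$ in the forthcoming small-divisor analysis.

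Second, since $\beta(\alpha)>0$, I would select a subsequence of convergents $p_{n_k}/q_{n_k}$ along which $\|q_{n_k}\alpha\|_{\T}$ is super-exponentially small. At scale $q_{n_k}$ the translation $\theta\mapsto\theta+q_{n_k}\alpha$ is nearly the identity, so $A_{q_{n_k}}$ is close to the periodic cocycle built from the rational rotation $p_{n_k}/q_{n_k}$. After lifting to the double cover $2\T$ (needed precisely so that the eigenvectors of this rational cocycle are realized as globally defined single-valued sections) and conjugating by an explicit matrix built from that eigenstructure, one arrives at an $\exp(-\delta q_{n_k})$-small perturbation of a constant $R_{n_k}\in SL(2,\R)$. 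A standard KAM iteration then peels off the oscillating remainder by solving cohomological equations step by step, losing analyticity at a rate controlled by $h-h'$, paying at most the factor $e^{C\delta q_{n_k}}$ in $\|B_{n_k}\|_{h'/(2\pi)}$, and driving the residual below $e^{-\delta q_{n_k}}$.

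The main obstacle, and what makes ARC genuinely harder than the classical Diophantine case, is that the small-divisor estimates in the cohomological step cannot be deduced from any arithmetic hypothesis on $\alpha$ alone; they must instead be extracted from subcriticality itself, via quantitative almost-localization for the dual cocycle combined with upper semicontinuity of the Lyapunov exponent in the strip. Propagating these through the renormalization is what yields uniform bounds along the full sequence $\{q_{n_k}\}$. Since Theorem \ref{arc} is invoked here as a black box, my actual deliverable is to cite \cite{arc} for the scheme and, in the EHM application, to verify via Theorem \ref{LE} that the dual cocycle $(\alpha,\tilde{A}_{|d|,E})$ is subcritical on the strip of width $L(\lambda)/(2\pi)$, which makes the hypotheses of Theorem \ref{arc} directly applicable.
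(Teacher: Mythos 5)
This theorem is imported verbatim from \cite{arc} and the paper offers no proof of it, so your decision to treat it as a black box and cite \cite{arc}, while verifying in the application (via Theorem \ref{LE}) that the dual cocycle is subcritical on the relevant strip, matches exactly what the paper does. Your accompanying sketch of the almost-reducibility scheme (subcriticality via the global theory, renormalization at the scales $q_{n_k}$, lifting to $2\T$, and a KAM iteration with analyticity loss controlled by $h-h'$) is a reasonable and accurate summary of the cited proof, but it is supplementary rather than required.
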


\begin{thm}\label{reducible}\cite{afk, hy, yz}
Let $(\alpha, A)\in \R\backslash \Q\times C^{\omega}_{\frac{h}{2\pi}}(\T, SL(2,\R))$ with $0<\tilde{h}<h^\prime$, $R\in SO(2,R)$, for every $\tau>0$, $\gamma>0$, if $\mathrm{rot}_f(\alpha, A)\in \mathrm{DC}_{\alpha}(\tau,\gamma)$, where
\begin{align*}
\mathrm{DC}_{\alpha}(\tau, \gamma)=\{\phi\in \T| \|2\phi-m\alpha\|_{\T}\geq \frac{\gamma}{(1+|m|)^{\tau}},\ \mathrm{for}\ \mathrm{any}\ \mathrm{nonzero}\ m\in \Z\}
\end{align*}
then there exists $T=T(\tau)$, $\kappa=\kappa(\tau)$ such that if 
\begin{align}\label{BBB}
\|A(\theta)-R\|_{\frac{h^\prime}{2\pi}}<T(\tau)\gamma^{\kappa(\tau)}(h^\prime-\tilde{h})^{\kappa(\tau)},
\end{align}
then there exists $B\in C^{\omega}_{\frac{\tilde{h}}{2\pi}}(\T, SL(2,\R))$ and $\varphi\in C^{\omega}_{\frac{\tilde{h}}{2\pi}}(\T, \R)$ such that
\begin{align*}
B(\theta+\alpha) A(\theta) B^{-1}(\theta)=R_{\varphi(\theta)},
\end{align*}
with estimates $\|B(\theta)-\mathrm{Id}\|_{\frac{\tilde{h}}{2\pi}}\leq \|A(\theta)-R\|_{\frac{h}{2\pi}}^{\frac{1}{2}}$ and $\|\varphi(\theta)-\hat{\varphi}(0)\|_{\frac{\tilde{h}}{2\pi}}\leq 2 \|A(\theta)-R\|_{\frac{h}{2\pi}}$.
Moreover if $\beta(\alpha)<\tilde{h}$, $(\alpha, A)$ is reducible.
\end{thm}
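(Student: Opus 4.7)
The plan is a KAM iteration in the spirit of Eliasson and Hou--You. Since $A$ is close to the constant rotation $R$, I would write $A(\theta) = R \cdot \exp F_0(\theta)$ with $F_0 \in C^\omega_{h'/(2\pi)}(\T, sl(2,\R))$ small by hypothesis \eqref{BBB}, and iteratively construct conjugacies $B_n$ in a nested sequence of strips $h' > h_1 > h_2 > \cdots \to \tilde h$, producing $A_n := (B_n \cdots B_1)(\cdot+\alpha)\, A\, (B_n \cdots B_1)^{-1}$ of the form $R_{\varphi_n(\theta)} \exp F_n(\theta)$, where the rotation part accumulates $\theta$-dependent corrections and $F_n$ decays super-exponentially.

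For one KAM step, write $B_n = \exp Y_n$ and decompose $F_n(\theta)$ into its $\mathrm{Ad}(R_{\varphi_n(\theta)})$-eigenspace components (eigenvalues $1, e^{\pm 4\pi i \varphi_n(\theta)}$). The eigenvalue-$1$ component is simply added into the rotation angle, giving $\varphi_{n+1} = \varphi_n + F_n^{\mathrm{diag}}$ and contributing no small divisors. The off-diagonal components must be killed, and the linearized cohomological equation takes the Fourier form $(e^{2\pi i k \alpha} - e^{\pm 4\pi i \hat\varphi_n(0)}) \hat Y_n(k) = \hat F_n^{\mathrm{off}}(k)$ up to higher-order errors. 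The Diophantine hypothesis $\|2\phi - m\alpha\|_\T \geq \gamma/(1+|m|)^\tau$ on the rotation number---which is invariant under conjugation and so persists along the iteration---bounds these denominators polynomially in $|k|$. Paley--Wiener estimates then recover $Y_n$ analytic in a slightly smaller strip with the standard polynomial loss $(h_n - h_{n+1})^{-C\tau}$, and conjugating by $B_n$ produces a new defect $F_{n+1}$ which is quadratic in $F_n$.

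The main technical obstacle is balancing this polynomial loss of analyticity at each step against the quadratic convergence of $\|F_n\|$: one needs $\sum_n (h_n - h_{n+1}) \leq h' - \tilde h$ while still $\|F_n\| \to 0$. Choosing $h_n - h_{n+1} \sim (h' - \tilde h)/n^2$ and requiring $\|F_0\|$ small in terms of $\gamma, \tau$ and $h' - \tilde h$ is exactly what forces the smallness threshold $T(\tau)\gamma^{\kappa(\tau)}(h' - \tilde h)^{\kappa(\tau)}$ in \eqref{BBB}. The infinite product $B = \lim_n B_n \cdots B_1$ then converges in $C^\omega_{\tilde h/(2\pi)}(\T, SL(2,\R))$ with the telescoped bound $\|B - \mathrm{Id}\|_{\tilde h/(2\pi)} \leq \|F_0\|^{1/2}$, yielding the reduction to $R_{\varphi(\theta)}$ with $\hat\varphi(0) = \lim \varphi_n$ and $\|\varphi - \hat\varphi(0)\|_{\tilde h/(2\pi)} \leq 2 \|F_0\|$.

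For the moreover claim, the remaining task is to conjugate $R_{\varphi(\theta)}$ to the constant $R_{\hat\varphi(0)}$, which reduces to the scalar cohomological equation $\psi(\theta+\alpha) - \psi(\theta) = \varphi(\theta) - \hat\varphi(0)$; in Fourier coefficients, $\hat\psi(k) = \hat\varphi(k)/(e^{2\pi i k \alpha} - 1)$ for $k \neq 0$. The classical consequence of \eqref{qnqn+1} and the definition \eqref{beta} gives, for every $\epsilon > 0$, a constant $C_\epsilon > 0$ with $\|k\alpha\|_\T \geq C_\epsilon e^{-(\beta(\alpha)+\epsilon)|k|}$ for all nonzero $k$. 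Combined with $|e^{2\pi i k\alpha} - 1| \geq 4\|k\alpha\|_\T$ and the analyticity bound $|\hat\varphi(k)| \leq \|\varphi\|_{\tilde h/(2\pi)} e^{-\tilde h |k|}$, the series defining $\psi$ converges absolutely in any strip of width strictly less than $\tilde h - \beta(\alpha)$, which is positive by hypothesis. Conjugating $R_{\varphi(\theta)}$ by the analytic rotation $\theta \mapsto R_{\psi(\theta)}$ then produces the constant $R_{\hat\varphi(0)}$ and hence full analytic reducibility of $(\alpha, A)$.
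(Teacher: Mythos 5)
This statement is quoted in the paper from \cite{afk, hy} and is not proved there, so there is no in-paper argument to compare against; your sketch reconstructs the standard KAM scheme that underlies those references, and at the level of a sketch it is essentially correct. In particular you correctly identify the two features that make this a ``Liouvillean-frequency'' theorem: the diagonal ($\mathrm{Ad}$-invariant) part of the perturbation is absorbed into a $\theta$-dependent rotation angle $\varphi_n(\theta)$ rather than solved away (which would require a Diophantine condition on $\alpha$ itself), while only the off-diagonal part is removed using the condition $\|2\phi-m\alpha\|_{\T}\geq \gamma/(1+|m|)^{\tau}$ on the fibered rotation number; and the final constant reduction of $R_{\varphi(\theta)}$ uses the scalar cohomological equation together with the bound $\|k\alpha\|_{\T}\geq C_\epsilon e^{-(\beta(\alpha)+\epsilon)|k|}$, which is exactly where the hypothesis $\beta(\alpha)<\tilde h$ enters.

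Two points deserve more care than your sketch gives them. First, the quantity appearing in the small denominators at step $n$ is $\hat\varphi_n(0)$, not the fibered rotation number itself; one must use $|\rho(\alpha,A_n)-\hat\varphi_n(0)|\leq C\|F_n\|$ (valid since the $B_n$ are close to the identity, hence homotopic to it) to transfer the Diophantine condition, and this only works after truncating the Fourier modes at a scale $N_n$ chosen so that the error $C\|F_n\|$ is small compared with $\gamma/(1+|k|)^{\tau}$ for $|k|\leq N_n$; the tail is thrown into the quadratic error. Second, the stated Diophantine condition only constrains $\|2\phi-m\alpha\|_{\T}$ for $m\neq 0$, while the $k=0$ off-diagonal denominator $|1-e^{\pm 4\pi i\phi}|$ must be controlled separately (in \cite{hy, afk} this is part of the nonresonance bookkeeping). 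Neither issue is a conceptual gap, but both are where the actual labor of the cited proofs lies.
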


\subsection{Proof of the pure point part of Theorem \ref{transition}}
This proof follows that of Proposition 4.2 in \cite{ayz}, however some modifications are needed. We include it here for reader's convenience.
Let us consider energy $E\in \Sigma_{\hat{\lambda}, \alpha}$ so that $\rho(\alpha, \tilde{A}_{|d|, E})\in \cup_{\gamma>0}DC_{\alpha}(\tau, \gamma)$ for some $\tau>1$. Note that since $|\cup_{\gamma>0}DC_{\alpha}(\tau, \gamma)|=1$, this is a full density of states measure set of energies.
Fix $\epsilon>0$ small enough so that $\beta(\alpha)<L(\lambda)-2\epsilon$. 

First, by Theorem \ref{arc}, for $h=L(\lambda)$ and $h^\prime=L(\lambda)-\epsilon$, there exists constant $C>0$ so that for $\delta>0$ small there exists a subsequence $\{\frac{p_{n_k}}{q_{n_k}}\}$ of the continued fraction approximants and $B_{n_k}\in C^{\omega}_{\frac{L(\lambda)-\epsilon}{2\pi}}(\T, PSL(2,\R))$, $R_{n_k}\in SO(2,\R)$, such that $\|B_{n_k}\|_{\frac{L(\lambda)-\epsilon}{2\pi}}\leq e^{C\delta q_{n_k}}$ and 
\begin{align}\label{arcarc}
\|B_{n_k}(\theta+\alpha)\tilde{A}_{|d|, E}(\theta)B_{n_k}^{-1}(\theta)-R_{n_k}\|_{\frac{L(\lambda)-\epsilon}{2\pi}}\leq e^{-\delta q_{n_k}}.
\end{align}
As is pointed out in \cite{ayz}, one could consult footnote 5 of \cite{arc} to prove the following estimate on the $\deg{B_{n_k}}$
\begin{align}\label{estdeg}
|\deg{B_{n_k}}| \leq C(\lambda, \epsilon)q_{n_k}
\end{align}
Clearly by (\ref{rhorhorelation}), 
\begin{align}\label{rhoBnkest}
\rho(\alpha, B_{n_k}(\theta+\alpha)\tilde{A}_{|d|, E}(\theta)B^{-1}_{n_k}(\theta))=\rho(\alpha, \tilde{A}_{|d|, E})+\deg{B_{n_k}}\alpha.
\end{align}
Thus since $\rho(\alpha, \tilde{A}_{|d|, E})\in DC_{\alpha}(\tau, \gamma)$ for some $\gamma>0$, by (\ref{estdeg}) and (\ref{rhoBnkest}) we have
\begin{align*}
  &\|\rho(\alpha, B_{n_k}(\theta+\alpha)\tilde{A}_{|d|, E}(\theta)B^{-1}_{n_k}(\theta))+m\alpha\|_{\T}\\
=&\|\rho(\alpha, \tilde{A}_{|d|, E})+(\deg{B_{n_k}}+m)\alpha\|_{\T}\\
\geq &\frac{\gamma}{(1+Cq_{n_k}+|m|)^{\tau}}\\ 
\geq &\frac{(1+Cq_{n_k})^{-\tau}\gamma}{(1+|m|)^{\tau}}.
\end{align*}
This implies $\rho(\alpha, B_{n_k}(\theta+\alpha)\tilde{A}_{|d|, E}(\theta)B^{-1}_{n_k}(\theta))\in DC_{\alpha}(\tau, (1+Cq_{n_k})^{-\tau}\gamma)$.

Secondly, fix $\tilde{h}=L(\lambda)-2\epsilon$. For $q_{n_k}$ large enough, in particular when the following holds, with $T(\tau), \kappa(\tau)$ from (\ref{BBB}),
\begin{align}\label{qnklarge}
(1+Cq_{n_k})^{\tau \kappa(\tau)}<T(\tau) e^{\frac{1}{2}\delta q_{n_k}} (\gamma \epsilon)^{\kappa (\tau)},
\end{align}
we have by (\ref{arcarc})
\begin{align}\label{10}
\|B_{n_k}(\theta+\alpha)\tilde{A}_{|d|, E}(\theta)B_{n_k}^{-1}(\theta)-R_{n_k}\|_{\frac{L(\lambda)-\epsilon}{2\pi}}< T(\tau) (1+Cq_{n_k})^{-\tau\kappa(\tau)}(\gamma\epsilon)^{\kappa(\tau)}.
\end{align}
Thus by Theorem \ref{reducible}, since $\beta(\alpha)<\tilde{h}=L(\lambda)-2\epsilon$, we get $(\alpha, \tilde{A}_{|d|, E})$ is reducible.
Note that this provides us with the requirement to apply our Theorem \ref{main}, and taking into account that $w(d)=0$ we get that $H_{\lambda, \alpha, \theta}$ has pure point spectrum for a.e. $\theta$.   $\hfill{} \Box$

\section*{Acknowledgement}
This research was partially supported by the NSF DMS-1401204. 
We would like to thank Qi Zhou and the anonymous referee for their comments on the earlier versions of this manuscript.
We also gratefully acknowledge support from the Simons Center for Geometry and Physics, Stony Brook University,  where some of this
work was done.

\renewcommand\refname{Reference}


\bibliographystyle{amsplain}

\end{document}